\newcommand{\Frak}{ \operatorname{Frac} }
\newcommand{\F}{ \mathbb{F} }
\newcommand{\dra}{\dashrightarrow}
\newcommand{\lra}{\longrightarrow}
\newcommand{\bG}{\mathbb{G}}
\theoremstyle{plain}
	\newtheorem{thm}{Theorem}
		\numberwithin{thm}{section}
	\newtheorem{lemma}[thm]{Lemma}
	\newtheorem{prop}[thm]{Proposition}
	\newtheorem{claim}[thm]{Claim}
	\newtheorem*{thm*}{Theorem}
	\newtheorem*{lemma*}{Lemma}
	\newtheorem*{prop*}{Proposition}
	\newtheorem*{cor*}{Corollary}
	\newtheorem*{conj*}{Conjecture}
\theoremstyle{definition}
	\newtheorem*{example*}{Example}
	\newtheorem{remark}[thm]{Remark}
\begin{document}


\title[Rational self-maps on semiabelian varieties]{Rational self-maps with a regular iterate on a semiabelian variety}

\author{Jason Bell}
\address{University of Waterloo \\
Department of Pure Mathematics \\
Waterloo, Ontario \\
N2L 3G1, Canada}
\email{jpbell@uwaterloo.ca}

\author{Dragos Ghioca}
\address{University of British Columbia\\
Department of Mathematics\\
Vancouver, BC\\
V6T 1Z2, Canada}
\email{dghioca@math.ubc.ca}

\author{Zinovy Reichstein}
\address{University of British Columbia\\
Department of Mathematics\\
Vancouver, BC\\
V6T 1Z2, Canada}
\email{reichst@math.ubc.ca}

\thanks{The authors were partially supported by Discovery Grants from the National Science and Engineering Research Council of Canada.}

\begin{abstract}
Let $G$ be a semiabelian variety defined over an algebraically closed field $K$ of characteristic $0$. Let $\Phi\colon G\dashrightarrow G$ be a dominant  rational self-map. Assume that an iterate $\Phi^m \colon G \to G$ is regular for some $m \geqslant 1$ and that there exists no non-constant homomorphism $\tau\colon G\lra G_0$ of semiabelian varieties such that $\tau\circ \Phi^{m k}=\tau$ for some $k \geqslant 1$. We show that under these assumptions $\Phi$ itself must be a regular. We also prove a variant of this assertion in prime characteristic
and present examples showing 
that our results are sharp. 
\end{abstract}

\maketitle



\section{Introduction}

When dealing with questions involving algebraic dynamical systems, it is often easier---and at times necessary---to work with an iterate of one's self-map rather than with the map itself.  This technique, for instance, is often employed in complex analytic or $p$-adic analytic uniformization results (see, for example, \cite{Sarah, Jason, BGT0, GT0, H-Y, River}), where one replaces one's map with an iterate so that a point that is periodic after reduction modulo a suitable prime becomes a fixed point.  To give another example, in Tits' \cite{Tit72} famous proof that finitely generated linear groups either contain a free non-abelian subgroup or are solvable-by-finite, one views the matrices as giving self-maps of a suitable projective space and works with a suitable iterate of a well-chosen element of the group along with a suitable conjugate in order to invoke the Ping-Pong lemma.

In this sense there is a kind of unspoken principle in arithmetic dynamics that a suitably chosen iterate of a rational self-map often becomes better behaved in some important respect, and knowing this then allows one to obtain information about one's original map.  For this reason, it is natural to ask about the relationship between an iterate of a self-map of an algebraic variety having a given desirable property and the map itself having this property.  


In this paper, we investigate what can be said when a rational self-map has some iterate that is a regular morphism. It is easily seen that one cannot in general deduce that the original map is regular.  For example, the Noether self-map of $\mathbb{P}^2$ given by $[x:y:z]\mapsto [1/x:1/y:1/z]$ is not regular, but its second iterate is the identity map.  On the other hand, one expects that for varieties that are sufficiently ``rigid'' one should be able to draw strong conclusions about a rational self-map that has a regular iterate.  

Of particular interest to us is the relationship between regularity of an iterate of a rational self-map and regularity of the original map for self-maps of semiabelian varieties.  We recall that a semiabelian variety is an algebraic group $G$ that fits into a short exact sequence of algebraic groups
\begin{equation}
\label{eq:semiabelian}
1\lra \bG_m^N\lra G\lra A\lra 1,
\end{equation}
for some non-negative integer $N$ and some abelian variety $A$.  Algebraic dynamical systems on semiabelian varieties have arisen in many classical conjectures (see, for example, \cite{DML-book, DMM-1, DMM-2, Zhang}) and enjoy a long history, due to the fact that finitely generated subgroups can be viewed as orbits of the identity under a group of translation maps. In addition, semiabelian varieties are rigid in the important respect that they do not allow non-constant maps from $\mathbb{A}^1$ (see also Remark \ref{rem2} for the connection of this property and the existence of dominant non-regular self-maps with some regular iterate).  For these reasons it is natural to consider the question of the relationship between regularity of an iterate of a rational self-map and regularity of the original map in this more restricted context. 




In the setting of semiabelian varieties, we are able to give an essentially complete characterization of when dominant rational self-maps have a regular iterate. 

\begin{thm}
\label{thm:main}
Let $G$ be a semiabelian variety defined over an algebraically closed field $K$, and let $\Phi \colon G \dra G$ be a dominant rational self-map defined over $K$. Suppose 

\begin{itemize}
\item
an iterate $\Phi^m$ is a regular map $G \to G$ for some $m \geqslant 1$, and

\item
there does not 
exists a non-constant homomorphism of semiabelian varieties $\chi \colon G \to G_0$ such that $\chi \circ \Phi^{mk} =\chi$ for some $k \geqslant 1$.
\end{itemize}

\smallskip
(a) If $\operatorname{char}(K) = 0$, then $\Phi$ is itself a regular map.

\smallskip
(b) If $\operatorname{char}(K) = p > 0$, then assume additionally that 
there does not exist a semiabelian variety $G_1$ defined over a finite subfield $\F_q$ of $K$ along with a non-constant group homomorphism of semiabelian varieties $\chi \colon G \to G_1$ such that $\chi \circ \Phi^{mk} = F^r\circ \chi$ for any positive integers $k$ and $r$, where $F:G_1\lra G_1$ is the Frobenius endomorphism corresponding to the finite field $\F_q$.  
Then $\Phi$ is a regular map.
\end{thm}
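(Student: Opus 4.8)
The plan is to first reduce to the case where $\Phi$ itself, and not just $\Phi^m$, maps the torus part $\bG_m^N$ of $G$ to the corresponding torus part --- that is, to understand the rational map $\Phi$ in terms of its action on the exact sequence \eqref{eq:semiabelian}. Recall the structure theory for dominant rational self-maps of semiabelian varieties: after composing with a translation, one may assume $\Phi(0)=0$ (replacing $\Phi$ by a conjugate that still has a regular iterate and still satisfies the no-factor hypotheses), and then any dominant rational self-map of $G$ that is defined at $0$ and fixes $0$ is in fact an isogeny up to the content of a "twist" coming from the fact that $\Phi$ need not be a homomorphism --- but on an abelian variety a rational self-map fixing the origin extending to a regular one is a homomorphism, and on a torus the Noether-type phenomena appear. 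So I would first treat the two extreme cases: (i) $G=A$ is an abelian variety, where every rational self-map is regular (abelian varieties contain no rational curves, so indeterminacy loci, being of codimension $\geqslant 2$ and not rationally connected, force $\Phi$ to extend), making the statement vacuous; and (ii) $G = \bG_m^N$ is a torus, where $\Phi$ is given by Laurent monomials scaled by constants, i.e. $\Phi(x) = c \cdot x^M$ for an integer matrix $M \in \mathrm{M}_N(\Z)$ and $c \in \bG_m^N$, and $\Phi$ is regular precisely when $M$ has non-negative entries. The key observation here is that $\Phi^m$ regular forces $M^m$ to have non-negative entries, and the no-factor hypothesis translates into a condition on the eigenvalues / combinatorial structure of $M$ ruling out the possibility that $M$ has a negative entry: if $M^m \geqslant 0$ but $M \not\geqslant 0$, one extracts an integer vector $v$ in the kernel (or a quotient) on which $M$ acts through a root of unity or through multiplication that is not faithful, and $v$ defines the offending character $\chi\colon G \to \bG_m$ with $\chi\circ\Phi^{mk}=\chi$ (in characteristic $p$, one must also allow the $M$-action to be absorbed by Frobenius, which is exactly the extra hypothesis in part (b) --- here $v$ defines $\chi\colon G\to G_1$ over $\F_q$).

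Second, for the general semiabelian $G$, I would use the canonical filtration: $\Phi$ need not respect $1 \to \bG_m^N \to G \to A \to 1$, but a theorem (which I expect is established earlier or is classical --- e.g. that the toric part is characteristic, being the maximal affine connected subgroup, hence preserved by every regular self-homomorphism, and by continuity/rigidity also by dominant rational self-maps defined on a large open set) tells us $\Phi$ descends to a dominant rational self-map $\bar\Phi \colon A \dra A$ and restricts to a dominant rational self-map on the torus fibre over the generic point. Applying case (i), $\bar\Phi$ is a regular isogeny of $A$; the problem is thus pushed into the extension data. Concretely, writing things out over the function field of $A$, the self-map $\Phi$ is determined by $\bar\Phi$ together with a "matrix part" acting on $\bG_m^N$ by an integer matrix $M$ exactly as in case (ii), plus a cocycle/translation correction. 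The regularity of $\Phi^m$ then says $M^m \geqslant 0$ together with a regularity statement for the correction term, and the no-factor hypothesis again forbids $M$ from having a negative entry (any such would produce, via the same eigenvector extraction, a nonconstant $\chi \colon G \to G_0$ with $\chi\circ\Phi^{mk}=\chi$, since the character of $G$ obtained from a $v$ with $Mv$ having a root-of-unity-type behaviour is well defined on all of $G$, not just the torus, because $\bar\Phi$ being an isogeny of finite order on the relevant quotient means the base contribution dies). This forces the monomial part of $\Phi$ to be regular, and then the residual indeterminacy of $\Phi$ can only lie in the correction/translation term, which is handled by the same abelian-variety rigidity as in case (i) since that term is valued in an abelian-variety-like object with no rational curves.

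The main obstacle I anticipate is the bookkeeping in the second step: disentangling the "matrix part" $M \in \mathrm{M}_N(\Z)$ from the extension cocycle and from $\bar\Phi$ in such a way that (a) regularity of $\Phi^m$ cleanly implies $M^m \geqslant 0$, and (b) a negative entry of $M$ genuinely produces a character of all of $G$ (not merely of the torus $\bG_m^N$) that is $\Phi^{mk}$-invariant. Point (b) is delicate because a priori the character extracted from an eigenvector of $M$ only sees the torus; one needs that the induced map on the relevant rank-one quotient of $G$ is genuinely periodic, which uses that $\bar\Phi$ is an isogeny and that the portion of $\bar\Phi$ that could contribute is forced to be trivial (or, in characteristic $p$, Frobenius) by the combinatorics of $M^m \geqslant 0$. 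In characteristic $p$ one has the additional subtlety that $M$ itself may be "regularized" after composing with a power of Frobenius --- e.g. $M$ could have the form where $p^r M$ has a sensible interpretation --- which is precisely why hypothesis (b) must explicitly exclude the $\chi\circ\Phi^{mk}=F^r\circ\chi$ scenario; I would handle this by working $\F_q$-rationally and tracking Frobenius twists alongside $M$ throughout. Modulo these points, the extension-data analysis together with the torus case and abelian-variety rigidity assembles into the full statement.
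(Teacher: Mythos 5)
Your proposal rests on a false structural premise, and this is fatal to the whole plan. You write that a dominant rational self-map of $\bG_m^N$ (or, after descending along $1\to\bG_m^N\to G\to A\to 1$, of a general semiabelian $G$) is ``given by Laurent monomials scaled by constants, i.e.\ $\Phi(x)=c\cdot x^M$,'' so that the entire problem reduces to combinatorics of an integer matrix $M$. That description is valid only for \emph{regular} dominant self-maps of a semiabelian variety (this is Lemma~\ref{lem3.2}, quoting Iitaka): a merely \emph{rational} dominant self-map need not be of this form at all, and the whole content of the theorem is to rule out the genuinely non-monomial ones. The paper's own counterexamples make this vivid: $x\mapsto -x/(x+1)$ on $\bG_m$ (Remark~\ref{rem3}) and $x\mapsto x^p+1$ on $\bG_m$ in characteristic $p$ (Remark~\ref{rem5}) are dominant rational self-maps with a regular iterate that are not monomial-plus-translation, and they are precisely the phenomena the hypotheses must exclude. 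By positing the monomial form from the outset you are in effect assuming the regularity you are trying to prove. A secondary error compounds this: on the torus $\bG_m^N$ \emph{every} monomial map $x\mapsto x^M$ is regular regardless of the signs of the entries of $M$; the criterion ``regular iff $M\geqslant 0$'' is the one for $\A^N$, not for $\bG_m^N$ (the matrix example with $C=\left(\begin{smallmatrix}10&2\\2&-1\end{smallmatrix}\right)$ in Remark~\ref{ex:matrix} lives on $\A^2$, where it illustrates a different point). So even in the pure torus case your ``key observation'' does not engage with the actual source of non-regularity, which is the indeterminacy locus of a genuinely non-monomial rational map.

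The paper's argument goes in an entirely different direction and does not use the exact sequence $1\to\bG_m^N\to G\to A\to 1$ at all. It first studies the indeterminacy locus directly: by a result of Artin, the indeterminacy locus of a rational map from a smooth variety into a group variety is pure of codimension $1$, and a complete-local-ring argument (Lemma~\ref{lem:J_nonempty}, using the \'etale-times-purely-inseparable factorization of a regular dominant self-map) shows that the stabilized indeterminacy locus of the iterates $\Phi^{mk+1}$ is \emph{totally} invariant under $\Phi^{m}$; this yields an irreducible totally invariant divisor $W$ if $\Phi$ is not regular (Proposition~\ref{cor:indeterminacy}). The second step invokes the Pink--R\"ossler classification of invariant subvarieties of semiabelian varieties to extract from $W$ the forbidden homomorphism $\chi$ (Proposition~\ref{prop:invariant}). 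If you want to salvage your approach, you would need, at minimum, a replacement for the claim that $\Phi$ has monomial form; absent that, the reduction to ``combinatorics of $M$'' never gets off the ground.
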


We note that any rational map of abelian varieties is regular; indeed, a rational map from a semiabelian variety to an abelian variety is always regular.  Thus Theorem \ref{thm:main} is vacuous when $G$ is an abelian variety.  Nevertheless, our main tool in proving Theorem \ref{thm:main}, Proposition \ref{prop:invariant} below, is actually non-trivial to verify for abelian varieties.   

\subsection{Proof strategy}

Our proof of Theorem~\ref{thm:main} will be divided into two steps given by the two propositions below.
Proposition~\ref{cor:indeterminacy} yields the existence of an irreducible  subvariety $W\subset G$ of codimension $1$, which is totally invariant under a suitable iterate $\Phi^{mk}$. Proposition~\ref{prop:invariant} then finishes the proof of Theorem~\ref{thm:main}.

\begin{prop}
\label{cor:indeterminacy}
Let $G$ be a semiabelian variety over an algebraically closed field $K$ and $\Phi \colon G \dasharrow G$ be a dominant rational self-map.
Suppose $\Phi$ is not regular but an iterate $\Phi^m$ is regular for some $m \geqslant 2$. Then there exists an irreducible 
codimension $1$ subvariety $W$ of $G$ which is totally invariant under a suitable iterate $\Phi^{mk}$, for some integer $k \geqslant 1$.
\end{prop}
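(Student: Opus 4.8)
The plan is to extract the totally invariant codimension-$1$ subvariety from the *indeterminacy locus* of $\Phi$. Since $\Phi$ is not regular but $\Phi^m$ is, $\Phi$ has a nonempty indeterminacy locus $I(\Phi)$. First I would fix a good model: let $X$ be a smooth projective (or at least normal, proper) equivariant compactification of $G$, so that $\Phi$ extends to a rational map $X \dashrightarrow X$; the key structural fact about semiabelian varieties I want to exploit is that $G$ admits no non-constant morphism from $\mathbb{A}^1$, and more to the point that $X \setminus G$ is a divisor whose geometry controls where indeterminacy can appear. The central claim to establish is that the indeterminacy locus of $\Phi$ (inside a suitable model) is pure of codimension $1$: that is, $\Phi$ fails to be regular precisely along a divisor, not merely along a small subset. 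This is where the semiabelian hypothesis does real work — on a general variety indeterminacy loci have codimension $\geqslant 2$, but the rigidity of $G$ (no rational curves) forces the ``bad'' behaviour of $\Phi$ to propagate along divisors. Concretely, I expect one argues that if $\Phi$ contracts or blows up something, the image of a contracted divisor, or the locus blown up from a point, must again be contained in such a divisor because a fibre of positive dimension of a morphism into $G$ would produce rational curves in $G$ unless it lies in the boundary; iterating and using $\Phi^m$ regular bounds how this can happen.

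Next I would track a single irreducible component $W_0$ of this codimension-$1$ indeterminacy-related locus under forward iteration. Because $\Phi^m$ is regular, the ``loss of regularity'' cannot compound indefinitely: applying $\Phi$ repeatedly, the images $\Phi(W_0), \Phi^2(W_0), \ldots$ (taking strict transforms / Zariski closures of images where $\Phi$ is defined) form a sequence of subvarieties of $G$, each of codimension $\le 1$, and regularity of $\Phi^m$ forces these to stabilize in a controlled way modulo $m$. More precisely, I would show the set of codimension-$1$ components arising this way is finite — there are only finitely many divisors on $G$ that can occur as components of the indeterminacy locus of the finitely many maps $\Phi, \Phi^2, \ldots, \Phi^m$ and their relevant transforms — so by pigeonhole some component $W$ returns to itself under $\Phi^{mk}$ for a suitable $k \geqslant 1$. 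The hypothesis $m \geqslant 2$ is used to guarantee $I(\Phi) \neq \emptyset$ gives us a genuinely non-regular step to feed into this argument, while $\Phi^{mk}$ (a regular map, being an iterate of $\Phi^m$) is what makes ``totally invariant'' meaningful: one wants $\Phi^{mk}$ to map $W$ onto $W$ and to have no other component of the preimage $(\Phi^{mk})^{-1}(W)$ outside $W$.

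To upgrade periodicity into *total* invariance I would argue as follows. Let $\psi = \Phi^{mk}$ be the regular iterate for which $\psi(W) = W$ (as sets, after closure). Since $\psi \colon G \to G$ is a dominant regular self-map of a semiabelian variety, it is a finite-to-one composition of a group homomorphism with a translation (this is the standard structure theorem for regular self-maps of semiabelian varieties, which I would invoke), hence $\psi$ is a finite flat morphism, so $\psi^{-1}(W)$ is again pure of codimension $1$, a union of finitely many irreducible divisors, one of which is $W$ and all of which are permuted among each other by suitable iterates. Replacing $k$ by a further multiple, I can assume $\psi$ fixes each such component, and then a degree/multiplicity count — $\deg(\psi)$ distributes over the components of $\psi^{-1}(W)$ with multiplicity — combined with the fact that $W$ itself is an indeterminacy divisor forces $\psi^{-1}(W) = W$ scheme-theoretically, i.e. $W$ is totally invariant. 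The main obstacle I anticipate is the first step: proving that non-regularity of $\Phi$ on a semiabelian variety manifests in codimension $1$ rather than in codimension $\geqslant 2$. This requires a careful analysis of the extension of $\Phi$ to a boundary-respecting compactification and genuine use of the absence of rational curves in $G$; everything after that (finiteness, pigeonhole, the degree count for total invariance) is comparatively routine.
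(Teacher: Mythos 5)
Your first step (the indeterminacy locus of each $\Phi^n$ is pure of codimension $1$) is correct, though it needs no compactification or rational-curve analysis: it is a direct citation of Artin's result that a rational map from a nonsingular variety into a \emph{group variety} has indeterminacy locus pure of codimension $1$. The genuine gap is in your last step, the upgrade from periodicity to total invariance. Your pigeonhole argument on forward images only produces a divisor $W$ with $\psi(W)=W$ for $\psi=\Phi^{mk}$, and no degree or multiplicity count can convert this into $\psi^{-1}(W)=W$: for $\psi(x_1,x_2)=(x_1^2,x_2^3)$ on $\bG_m^2$ and $W=\{1\}\times\bG_m$ one has $\psi(W)=W$ and $\psi$ finite flat, yet $\psi^{-1}(W)=\{\pm1\}\times \bG_m\neq W$. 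Being ``an indeterminacy divisor'' of $\Phi$ is not by itself the extra input that rules this out.

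What actually forces total invariance in the paper is a different mechanism. One considers the full indeterminacy loci $W_n$ of the iterates $\Phi^n$, notes $W_{n+m}\subseteq W_n$ (since $\Phi^{n+m}=\Phi^m\circ\Phi^n$ with $\Phi^m$ regular), lets the descending chain $W_1\supseteq W_{m+1}\supseteq W_{2m+1}\supseteq\cdots$ stabilize at $W$, and then proves the key identity $W_{(k+1)m+1}=(\Phi^m)^{-1}(W_{km+1})$, which at the stable stage reads $W=(\Phi^m)^{-1}(W)$. This identity is the content of a nontrivial lemma: if $f\circ\Psi=g$ with $\Psi$ a regular dominant self-map of a semiabelian variety and $g$ regular at $x$, then $f$ is regular at $\Psi(x)$. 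Its proof goes through complete local rings and the fact that every isogeny of semiabelian varieties factors as (purely inseparable) followed by (\'etale), giving $\Frak(A)\cap\widehat{\mathcal{O}}_x=A$ for $A=\Psi^*(\widehat{\mathcal{O}}_y)$. This lemma is false for general smooth varieties --- the paper exhibits the monomial map $(x,y)\mapsto(x^{10}y^2,x^2y^{-1})$ on $\mathbb{A}^2$, non-regular with all higher iterates regular --- so it cannot be replaced by the soft finiteness and counting arguments you propose. The passage from the (possibly reducible) totally invariant $W$ to an irreducible component is then done at the end, using purity of $\Psi^{-1}(W_1)$ in codimension $1$, much as you suggest; that part of your outline is fine.
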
 

Here and throughout our paper, we will say that a closed subvariety $W\subset X$ is totally invariant under 
a dominant self-map $\Psi \colon X \to X$ if $W = (\Psi)^{-1}(W)$. The last equality should be understood set-theoretically: 
if $\Psi(x) \in W$, then $x \in W$ for any $x \in X(K)$. For example, if $\Psi \colon \mathbb{A}^1 \lra \mathbb{A}^1$ 
is given by $x \mapsto x^{2}$ and $W$ is the origin in $\mathbb{A}^1$, cut out by $x = 0$,  then the preimage
$(\Psi)^{-1}(W)$ is cut out by $x^{2} = 0$. So, $W = \Psi^{-1}(W)$ set-theoretically but not scheme-theoretically.

\begin{prop}
\label{prop:invariant}
Let $G$ be a semiabelian variety defined over an algebraically closed field $K$, endowed with a dominant regular self-map $\Psi$. Suppose there exists an irreducible codimension $1$ subvariety $W\subset G$, which is totally invariant under $\Psi$.

\smallskip
(a) Assume further that $\operatorname{char}(K) = 0$. Then there 
exists an integer $k\geqslant 1$ and a semiabelian variety $G_0$ defined over $K$ along with a non-constant group homomorphism of semiabelian varieties  $\chi \colon G \to G_0$ such that $\chi \circ \Psi^k =\chi$. 

\smallskip
(b) Assume now that $\operatorname{char}(K) = p > 0$. Then at least one of the following two statements must hold: 
\begin{itemize}
\item either there 
exists an integer $k\geqslant 1$ and a semiabelian variety $G_0$ defined over $K$ along with a non-constant group homomorphism of semiabelian varieties  $\chi \colon G \to G_0$ such that $\chi \circ \Psi^k =\chi$. 
\item or there exist integers $k, r \geqslant 1$, and a semiabelian variety $G_1$ defined over a finite subfield $\F_q$ of $K$ along with a non-constant group homomorphism of semiabelian varieties $\chi \colon G \to G_1$ such that $\chi \circ \Psi^k = F^r\circ \chi$, where $F \colon G_1\lra G_1$ is the Frobenius 
endomorphism and $F^r$ acts trivially on the finite field $\F_q$. 
\end{itemize}
\end{prop}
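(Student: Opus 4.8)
The plan is to normalise $\Psi$, pass to a quotient on which the divisor $W$ becomes ``rigid,'' and then read off the homomorphism $\chi$.

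\textbf{Step 1 (normalisation; reduction to trivial stabiliser).} It is standard that every morphism of semiabelian varieties is a translation composed with a homomorphism, so write $\Psi=T_c\circ\psi$ with $\psi\colon G\to G$ a homomorphism and $T_c$ translation by $c\in G(K)$; dominance forces $\psi$ to be an isogeny, and iterating gives $\Psi^n=T_{c_n}\circ\psi^n$ with $c_n=(1+\psi+\dots+\psi^{n-1})(c)$. Hence for a homomorphism $\chi$ the sought identity $\chi\circ\Psi^k=\chi$ (resp.\ $=F^r\circ\chi$) is equivalent to $\chi\circ\psi^k=\chi$ (resp.\ $=F^r\circ\chi$) together with $\chi(c_k)=0$. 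Now let $H=\mathrm{Stab}(W)^0$. From $\psi^{-1}(W-c)=W$ one gets $\psi(W)=W-c$, hence $\psi(H)\subseteq H$, so $\Psi$ descends to a dominant regular self-map $\overline\Psi$ of the semiabelian variety $G/H$ under which the (irreducible, codimension one) image $\overline W$ is again totally invariant, now with \emph{finite} stabiliser; quotienting further by that finite group we may assume $\mathrm{Stab}(W)=0$, since a $\chi$ produced on the quotient pulls back (precompose with the quotient map) to a $\chi$ on $G$. Finally, writing $1\to T\to G\xrightarrow{\pi}A\to1$ with $T$ the maximal torus, if $W$ failed to dominate $A$ then $W=\pi^{-1}(\pi W)$ would be $T$-stable, contradicting $\mathrm{Stab}(W)=0$ unless $T=0$; so either $G=A$ is an abelian variety, or $W\to A$ is dominant.

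\textbf{Step 2 (rigidity of $\psi$, extreme cases).} Since $\Psi$ is finite and flat, $\Psi^*W=eW$ as divisors for the ramification index $e\geqslant1$, whence $(\Psi^n)^*W=e^nW$ and $\psi^*[\mathcal O_G(W)]=e\,[\mathcal O_G(W)]$ in $\mathrm{NS}(G)$; moreover $e=1$ in characteristic $0$ (isogenies are then étale), while $e$ is a power of $p$ in characteristic $p$ (the separable part of $\psi$ contributes no ramification, the inseparable part a $p$-power). Treat the two extreme cases. If $G=A$: the class $[\mathcal O_A(W)]$ is effective and nondegenerate (the latter because $\mathrm{Stab}(W)=0$), hence ample, so $\phi_W\colon A\to\widehat A$ is an isogeny satisfying $\widehat\psi\,\phi_W\,\psi=e\,\phi_W$, i.e.\ $\psi^{\dagger}\psi=e$ for the Rosati involution of $\phi_W$; positivity of the Rosati form then forces every eigenvalue of $\psi$ on $H_1(A)$ to have absolute value $\sqrt e$, and these eigenvalues are algebraic integers. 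If $G=T=\mathbb G_m^N$: here $\mathrm{Pic}(G)=0$, $W=V(f)$ for a Laurent polynomial $f$, and $f\circ\Psi=(\text{unit})\cdot f^{e}$; comparing Newton polytopes shows the linear part of $\psi|_T$ carries $\mathrm{Newt}(f)$ onto a translate of $e\cdot\mathrm{Newt}(f)$, and since $\mathrm{Stab}(W)=0$ forces $\mathrm{Newt}(f)$ to be full dimensional, $\tfrac1e\,\psi|_T$ is an automorphism of a bounded full-dimensional polytope, hence of finite order, giving $(\psi|_T)^{j}=[e^{j}]$ for some $j\geqslant1$.

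\textbf{Step 3 (producing $\chi$).} In characteristic $0$ one has $e=1$: then $\psi$ preserves an ample class, so $\deg\psi=1$ and $\psi$ is an automorphism, unitary for $\phi_W$ in the abelian case and of finite order in the toric case; Kronecker's theorem (an algebraic integer all of whose conjugates lie on the unit circle is a root of unity) then gives $\psi^{j}=\mathrm{id}$ for some $j$. Hence $\Psi^{j}=T_{c_j}$ is a translation; $W$ being totally invariant under every $\Psi^{jk}$ forces $kc_j\in\mathrm{Stab}(W)=0$ for some $k$, so $\Psi^{jk}=\mathrm{id}$, and the (composite) quotient map of Step 1 serves as $\chi$. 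In characteristic $p$: if $e=1$ the same argument yields the first alternative; if $e=q=p^{s}>1$, then $\psi^{j}$ has all eigenvalues of absolute value $q^{j/2}$ (abelian case) or equals $[q^{j}]$ (toric case), so one expects $G$—or the quotient carrying $\chi$—to descend to $\mathbb F_{q}$ in such a way that $\psi^{j}$ becomes a power of the Frobenius $F$, which, combined with the bookkeeping of the translation term $c_{k}$, yields the second alternative $\chi\circ\Psi^{k}=F^{r}\circ\chi$. The genuinely mixed case (both $T\neq0$ and $A\neq0$) should be obtained by interlacing the two analyses along the filtration $T\subset G$.

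\textbf{Expected main obstacle.} I expect two places to require real work. First, the mixed case: the extension class of $1\to T\to G\to A\to1$ could a priori let $\psi$ be non-semisimple, and one must check that $W$ rigidifies this extra data too—this is surely why the authors flag that Proposition~\ref{prop:invariant} is already delicate even for abelian varieties. Second, in characteristic $p$ the implication ``$\psi^{j}$ has Weil-number eigenvalues $\Rightarrow$ the variety descends to $\mathbb F_{q}$ and $\psi^{j}$ is a Frobenius power'' is a nontrivial input in the spirit of positive-characteristic Mordell–Lang theory, and the translation part $c$ must be tracked carefully through this descent, since it need not vanish on $G$ itself but only after projecting to the target of $\chi$.
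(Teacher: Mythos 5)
Your route is genuinely different from the paper's. The paper does not analyse $\Psi$ through the divisor class of $W$ at all: after the same reduction to $\operatorname{Stab}_G(W)=\{0\}$ (their Lemma~\ref{lem:quotient}), it invokes the Pink--R\"ossler structure theorem \cite[Theorem~3.1]{P-R} to write $W=\gamma+h_1(X_1)+\cdots+h_l(X_l)$ with each dynamical system $(A_i,f_i)$ \emph{pure of weight} $a_i$, observes that codimension $1$ plus trivial stabilizer forces $h_1\times\cdots\times h_l$ to be an isogeny onto $G$, and then reads off the dichotomy directly: all weights zero gives $\Psi^k=\mathrm{id}$, and a positive weight gives an $A_i$ over a finite field with $f_i$ a Frobenius power, from which $\chi$ is built by projecting a dual isogeny $G\to A_1\times\cdots\times A_l$ to the first factor. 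Your Rosati/Newton-polytope analysis is an attempt to reprove the relevant rigidity by hand, and in characteristic $0$ your two extreme cases do essentially work (for the torus, $|\det M|=1$ and the centred polytope argument give finite order; for the abelian case, $\psi^\dagger\psi=1$ plus Kronecker gives $\psi^j=\mathrm{id}$).

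However, the two obstacles you flag are genuine gaps, not loose ends. First, the mixed case is where the real content lies and your extreme-case analysis does not transfer: when $T\neq 0\neq A$ the divisor $W$ surjects onto $A$ (as you note), so it is not a divisor on $A$ and the ampleness/Rosati argument has no foothold, while on the torus fibres $W$ need not restrict to anything with controlled Newton polytope; ``interlacing the two analyses along the filtration'' is precisely the step that requires the Pink--R\"ossler classification (or an equivalent). Note that passing to endomorphisms via $\mathrm{Hom}(A,T)=0$ only helps once you separately control $\psi|_T$ and $\bar\psi$, which you cannot yet do. Second, in characteristic $p$ the implication from ``$e=p^s>1$ and $\psi^j$ has Weil-number eigenvalues'' to ``a quotient of $G$ descends to $\mathbb F_q$ and $\psi^j$ becomes $F^r$ there'' is exactly the hard theorem you would need; it is the positive-weight half of \cite[Theorem~3.1]{P-R}, and your proof asserts it (``one expects'') rather than establishes it. As written, part (a) is incomplete in the mixed case and part (b) is unproved; both gaps are filled in the paper by a single appeal to the Pink--R\"ossler theorem, which is why the paper's argument, unlike yours, never needs to compute $e$, a Rosati involution, or a Newton polytope.
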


\subsection{Notational conventions}

Throughout this note, $K$ will denote an algebraically closed field, $G$ a semiabelian variety defined over $K$ and $\Phi \colon G \dasharrow G$ a rational dominant self-map of $G$ (as a $K$-variety).

It is well known that $G$ is an abelian group and every regular dominant self-map $G \to G$ of $G$ (again, as a $K$-variety) is a composition of a translation and a group endomorphism; see Lemma~\ref{lem3.2}. 
For background material on semiabelian varieties we refer the reader to~\cite{NW} and (in prime characteristic) to~\cite{Iitaka}. 
For any $n \geqslant 0$, we will denote the $n$-th iterate of $\Phi$ by $\Phi^n \colon G \dasharrow G$. As usual, by
$\Phi^0$ we mean the identity map $G \to G$.

\subsection{Structure of the paper}
The remainder of this paper is structured as follows. Section~\ref{sec:remarks} contains 
examples and remarks motivating Theorem~\ref{thm:main}~and Proposition~\ref{prop:invariant}.
After some preliminary lemmas in Section~\ref{sect.preliminaries}, we prove
Proposition~\ref{cor:indeterminacy} in Section~\ref{sec:indeterminacy} and Proposition~\ref{prop:invariant} 
in Section~\ref{sec:proof}. As we pointed out above, our main result, Theorem~\ref{thm:main},
is a direct consequence of these two propositions.

\section{Motivating remarks}
\label{sec:remarks}

The purpose of this section is to motivate Theorem~\ref{thm:main}~and Proposition~\ref{prop:invariant} 
and show that their assumptions cannot be weakened.

\subsection{Remarks on Theorem~\ref{thm:main}}
\label{subsec:examples}

\begin{remark} \label{rem2}
Theorem~\ref{thm:main}(a) fails if we replace $G$ by an arbitrary irreducible smooth variety $X$, even if we assume $X$ to be affine. 
To see this, set $X=\mathbb{A}^1$ (defined over a field of characteristic $0$)  and define $\Phi \colon\mathbb{A}^1 \dra \mathbb{A}^1$ by  $\Phi(x)=x^{-2}$. In this case $\chi \circ \Phi^{k} \neq \chi$ for any non-constant morphism $\chi \colon \mathbb A^1 \to X_0$ 
and any $k \geqslant 1$. Moreover, $\Phi^2$ is regular on $\mathbb{A}^1$ but $\Phi$ is not regular.

\smallskip
The special properties of semiabelian varieties $G$ used in the proof of Theorem~\ref{thm:main} are the following.

\smallskip
(i) We understand regular self-maps on $G$: see Lemma~\ref{lem3.2}.

\smallskip
(ii) We understand invariant subvarieties of $G$ under the action of a regular self-map; see~\cite{P-R}.

\smallskip \noindent
Neither (i) nor (ii) is available if we replace $G$ by a general smooth affine variety $X$.
\end{remark}

\begin{remark} \label{rem3}
In Theorem~\ref{thm:main}(a) the assumption that there is no non-constant morphism $ \chi \colon G \to G_0$ of semiabelian varieties such that $\chi \circ \Phi^{km} = \chi$ for 
some $k \geqslant 1$ is indeed necessary. To see this, suppose $G=\bG_m$ in Theorem~\ref{thm:main} and $\Phi \colon \bG_m \dashrightarrow \bG_m$ is given by
$$x\mapsto \frac{-x}{x+1}.$$
Then $\Phi^2$ is the identity on $\bG_m$. Hence $\Phi^2$ is regular even though $\Phi$ is not, and
$\chi \circ \Phi^2 = \chi$ when $\chi = {\rm id} \colon G \to G$.
\end{remark}

\begin{remark}
\label{rem5}
Working with $G=\bG_m$ once again, we see that the assumption that $\Phi^{mk}$ does not preserve a non-constant homomorphism $\chi \colon \bG_m \to G_0$ of semiabelian varieties for any $k \geqslant 1$, as in part (a), is not enough to guarantee that $\Phi$ is regular in part (b).
For example, in characteristic $p$ consider the rational map $\Phi \colon \bG_m \dashrightarrow \bG_m$ given by $x \mapsto x^p + 1$. 
Then $\chi\circ  \Phi^{mk} \neq \chi$ for any $k \geqslant 1$ and any
non-constant morphism $\chi \colon \mathbb G_m \to G_0$ of semiabelian varieties. (Indeed, here we may assume without loss of generality
that $G_0 = \mathbb G_m$ 
and $\chi \colon t \to t^d$ for some integer $d \neq 0$.)
On the other hand, 
$\Phi^p \colon x \mapsto x^{p^p}$ is regular but $\Phi$ is not. Note, however, that if $\chi \colon \mathbb G_m \to \mathbb G_m$
is the identity map, then $\chi \circ \Phi^p = \chi^{p^p}$.  
\end{remark}

\begin{remark}
\label{rem1.5}
The assumption of Theorem~\ref{thm:main}(a)
that there does not 
exists a nonconstant group homomorphism of semiabelian varieties $\chi \colon G \to G_0$ such that $\chi \circ \Phi^{mk} =\chi$ for any $k \geqslant 1$, is readily seen to be satisfied if there exists a point $\alpha\in G(K)$ with a Zariski dense orbit under $\Phi^m$.
Indeed, if $\chi \circ \Phi^{mk} = \chi$ for some $k \geqslant 1$, then $\chi$ can assume only finitely many values 
on the orbit of $\alpha$. If $\chi \colon G\lra G_0$ is non-constant, this orbit cannot be dense in $G$.
\end{remark}

\begin{remark} \label{rem1.6}
Remark~\ref{rem1.5} relates our question to the Zariski dense orbit conjecture, due to Amerik-Campana \cite{A-C} and  Medvedev-Scanlon \cite{M-S} (both generalizing an earlier conjecture of Zhang \cite{Zhang}). Consider a dominant rational self-map $\Phi:X\dra X$, where $X$ is an arbitrary variety defined over an algebraically closed field $K$ of characteristic $0$. Assume that there exists no non-constant rational map $f:X\dra \mathbb{P}^1$ such that $f\circ \Phi=f$.
The Zariski dense orbit conjecture predicts that in this situation there exists a point $\alpha\in X(K)$ whose 
orbit under $\Phi$ is well-defined and Zariski dense in $X$. This conjecture has been established
in several special cases, e.g., where $X$ is a semiabelian variety (see \cite{G-Scanlon, G-Matt, G-S}), 
or a smooth variety of positive Kodaira dimension (see \cite{BGRS}), but it remains open in general
over countable fields $K$ of characteristic $0$. 

For varieties $X$ defined over algebraically closed fields $K$ of characteristic $p>0$, the Zariski dense orbit conjecture is even more subtle. Here one needs to modify the statement to take into account the possible action of the Frobenius on varieties defined over finite fields. 
In fact, the characteristic $p$ analogue of the Zariski dense orbit conjecture stated 
in~\cite{G-S-2} involves a condition similar to the one appearing in part~(b) of our Theorem~\ref{thm:main}. 
Only sporadic results are known in positive characteristic (e.g., the case when $K$ is uncountable 
is handled in~\cite{BGR} and the case of an endomorphisms of $\bG_a^N$ defined over a finite field in \cite{G-S-3}).
\end{remark}

\subsection{Remarks on Proposition~\ref{prop:invariant}}
\label{subsec:examples_2}

\begin{remark} \label{rem.prop0}
Invariant subvarieties of semiabelian varieties under a dominant self-map $\Psi:G\lra G$ have been extensively studied, starting with the work of M.~Hindry \cite{Hindry} (where $\Psi$ is the multiplication-by-$l$ map on $G$) and culminating with the thorough description for all dominant regular maps $\Psi$ in arbitrary characteristic, due to
R.~Pink and D.~Rossler~\cite{P-R}. Invariant subvarieties appear naturally in the main conjectures in arithmetic dynamics, such as the Dynamical Mordell-Lang Conjecture (see \cite{DML-book}) and the Dynamical Manin-Mumford Conjecture (see \cite{DMM-1, DMM-2}). Totally invariant subvarieties come up in the study of the algebraic degree of a self-map $\Psi \colon G \to G$  (see \cite{BG-1}) and are related to arithmetic properties of $\Psi$; see \cite{Cantat, BGT-2, BGT-3}. 
\end{remark}


\begin{remark}
\label{rem:1}
The following example shows that we cannot expect the conclusion of Proposition~\ref{prop:invariant} to hold if we only assume that 
$W$ is invariant under $\Psi$ (i.e., $\Psi(W)=W$) instead of totally invariant (i.e., $\Psi^{-1}(W)=W$).

Let $K$ be a field of characteristic $0$, $\Psi:\bG_m^2\lra \bG_m^2$ be and automorphism of $\bG_m^2$ given by 
$\Psi(x_1,x_2)=\left(x_1^2,x_2^3\right)$,
and $W$ be the codimension $1$ closed subgroup $\{ 1 \} \times \bG_m$. In other words, $W$ is cut out by $x_1 = 1$.
Then $W$ is invariant under $\Psi$ but is not totally invariant. On the other hand,
it follows from Remark~\ref{rem1.5} that
$\chi \circ \Psi^k \neq \chi$ for any non-constant homomorphism $\bG_m^2 \to G_0$ of semiabelian varieties. 
\end{remark}

\begin{remark}
\label{rem:2}
The next example shows that Proposition~\ref{prop:invariant} fails if we only assume that there exists a totally invariant subvariety of positive dimension (but not necessarily of codimension $1$).

Let $K$ be a field of characteristic $0$ and $\Psi \colon \bG_m^4\lra \bG_m^4$ be an automorphism of $\bG_m^4$ given by
$$\Psi(x_1,x_2,x_3,x_4)=\left(x_1^2x_2,x_1^3 x_2^2,x_3^2 x_4, x_3^3 x_4^2\right).$$
The surface $S$ given by $x_1=x_3$ and $x_2=x_4$ is totally invariant, but $\chi \circ \Psi^k \neq \chi$ for any non-constant morphism $\bG_m^2 \to G_0$ of semiabelian varieties, for any $k \geqslant 1$. 
So, the hypothesis in Proposition~\ref{prop:invariant} that $W$ has codimension $1$ is indeed crucial. 
\end{remark}


\section{Preliminaries}
\label{sect.preliminaries}

In this section we collect three preliminary lemmas. These will be used in the proofs of Propositions~\ref{cor:indeterminacy} and~\ref{prop:invariant}. The first lemma concerns  maps of schemes of finite type, 
the other two are about self-maps $G \to G$ of semiabelian varieties.

\begin{lemma}
\label{lem3.1}
Let $\Psi \colon X \to Y$ be a morphism of integral schemes of finite type over $K$. Let $x$ be a $K$-point of $X$,
$y = \Psi(x)$, and let $\widehat{\mathcal{O}}_x$ and $\widehat{\mathcal{O}}_y$ denote the complete local rings at $x$ and $Y$, 
respectively. If $A:=\Psi^* (\widehat{\mathcal{O}}_y )$, viewed as a subring of $\widehat{\mathcal{O}}_x$, then

\smallskip
(a) If $\Psi$ is \'etale, then $\Frak(A) \cap \widehat{\mathcal{O}}_x = A$.

\smallskip
(b) Moreover, assume that $\Psi$ can be written as a composition $\Psi_{\rm et} \circ \Psi_{\rm insep}$, where
$\Psi_{\rm insep} \colon X \to Y_1$ is purely inseparable at $x$, and $\Psi_{\rm et} \colon Y_1 \to Y$ is \'etale
at $y_1 = \Psi(x)$. Assume further that $Y$ is smooth at $y$. Then, once again, $\Frak(A) \cap \widehat{\mathcal{O}}_x = A$.
\end{lemma}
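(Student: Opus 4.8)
The plan is to prove Lemma~\ref{lem3.1} by reducing everything to a statement about complete local rings and their fraction fields, and then exploiting normality.

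First I would set up the local picture. Since $X$ is integral of finite type over $K$ and $x$ is a $K$-point, the complete local ring $\widehat{\mathcal{O}}_x$ is a Noetherian complete local domain. The key structural fact I would invoke is that $\widehat{\mathcal{O}}_x$ is normal when $X$ is smooth at $x$ (which, in part (a), follows if we also assume smoothness — though for part (a) one actually only needs that the \'etale map pulls back normality, so I would either add the standing hypothesis that $X$ is smooth or note that \'etale-ness plus smoothness of $Y$ gives smoothness of $X$ at $x$). The subring $A = \Psi^*(\widehat{\mathcal{O}}_y) \subseteq \widehat{\mathcal{O}}_x$ is again a complete local domain, and the claim $\Frak(A) \cap \widehat{\mathcal{O}}_x = A$ is precisely the assertion that $A$ is \emph{integrally closed in $\widehat{\mathcal{O}}_x$}, i.e. that any element of $\widehat{\mathcal{O}}_x$ lying in $\Frak(A)$ already lies in $A$.

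For part (a): if $\Psi$ is \'etale at $x$, then $\widehat{\mathcal{O}}_x \cong \widehat{\mathcal{O}}_y = A$ as rings (an \'etale morphism induces an isomorphism on completions of local rings at points lying over each other with trivial residue extension, which holds here since both residue fields are $K$). Hence $A = \widehat{\mathcal{O}}_x$ and the intersection $\Frak(A)\cap\widehat{\mathcal{O}}_x = \widehat{\mathcal{O}}_x = A$ is immediate. For part (b): factor $\Psi$ as $\Psi_{\mathrm{et}}\circ\Psi_{\mathrm{insep}}$ with $\Psi_{\mathrm{insep}}\colon X\to Y_1$ purely inseparable at $x$ and $\Psi_{\mathrm{et}}\colon Y_1\to Y$ \'etale at $y_1$. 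By the \'etale case applied to $\Psi_{\mathrm{et}}$, we have $\widehat{\mathcal{O}}_{y_1}\cong\widehat{\mathcal{O}}_y$, so it suffices to treat a purely inseparable map, i.e. we may assume $A = \Psi_{\mathrm{insep}}^*(\widehat{\mathcal{O}}_{y_1})\subseteq\widehat{\mathcal{O}}_x$ with $\Frak(\widehat{\mathcal{O}}_x)/\Frak(A)$ purely inseparable. Now the point is that $Y$ smooth at $y$ forces $Y_1$ smooth at $y_1$ (via the \'etale map) hence $\widehat{\mathcal{O}}_{y_1}\cong A$ is a regular, in particular normal, complete local ring. Any $f\in\widehat{\mathcal{O}}_x$ with $f\in\Frak(A)$: since $X$ is integral and $\widehat{\mathcal{O}}_x$ is a finite (module-finite) extension of the normal domain $A$ with purely inseparable fraction field extension, $f$ is integral over $A$; being in $\Frak(A)$ and $A$ normal, $f\in A$.

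The main obstacle I anticipate is justifying the finiteness/integrality in part~(b): one needs that $\widehat{\mathcal{O}}_x$ is a \emph{module-finite} extension of $A$, or at least that every element of $\widehat{\mathcal{O}}_x$ is integral over $A$. This should follow because $\Psi_{\mathrm{insep}}$ purely inseparable at $x$ means $\Frak(\widehat{\mathcal{O}}_x)$ is a purely inseparable — in particular algebraic — extension of $\Frak(A)$, and every element $f$ of $\widehat{\mathcal{O}}_x$ satisfies $f^{p^e}\in A$ for some $e$ (using that $A$ contains all $p^e$-th powers coming from $Y_1$, or more carefully that the relative Frobenius factors appropriately); then $f^{p^e}\in A$ together with $f\in\Frak(A)$ and normality of $A$ gives $f\in A$. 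I would spend the bulk of the argument making this last point precise — identifying exactly in what sense ``purely inseparable at $x$'' lets us write $f^{p^e}\in A$ — and would lean on the normality of $A$, which is where the hypothesis that $Y$ is smooth at $y$ is used in an essential way (the example in the paper with $x\mapsto x^p$ on $\mathbb{G}_m$ shows smoothness of the source alone is not the issue; it is normality of the \emph{target's} local ring $A$ that matters).
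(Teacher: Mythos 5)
Your proposal is correct and follows essentially the same route as the paper: part (a) via the isomorphism $\widehat{\mathcal{O}}_y \simeq \widehat{\mathcal{O}}_x$ induced by an \'etale map, and part (b) by reducing to the purely inseparable case through the factorization, then using that $A \simeq \widehat{\mathcal{O}}_{y_1}$ is regular (the paper says UFD, you say normal --- same point) so that $f^{p^e}\in A$ together with $f\in\Frak(A)$ forces $f\in A$. The only cosmetic difference is your detour through module-finiteness, which you correctly note is not needed once one has the $p^e$-th power statement.
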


\begin{proof}  (a) If $\Psi$ is \'etale at $x$, then
$\Psi^* \colon \widehat{\mathcal{O}}_y \to \widehat{\mathcal{O}}_x$ is an isomorphism.
Hence, $A = \widehat{\mathcal{O}}_x$, and $\Frak(A)\cap \widehat{\mathcal{O}}_x = \Frak(A) \cap A = A$.

(b) We may assume without loss of generality that $\operatorname{char}(K) = p > 0$; otherwise
$\Psi_{\rm insep}$ is the identity map, and part (b) reduces to part (a).
Decompose $\Psi^* \colon \widehat{\mathcal{O}}_y \twoheadrightarrow A \hookrightarrow \widehat{\mathcal{O}}_x$ 
as
\DeclareRobustCommand\longtwoheadrightarrow
     {\relbar\joinrel\twoheadrightarrow}
\[ \Psi^* \colon \widehat{\mathcal{O}}_y \; \stackrel{\; \Psi_{\rm et}^* \;}{\longrightarrow} \;
\widehat{\mathcal{O}}_{y_1} \stackrel{\; \Psi_{\rm insep}^* \;}{\longtwoheadrightarrow} A \hookrightarrow \widehat{\mathcal{O}}_x. \] 
Now $\Psi_{\rm et}^*$ is an isomorphism between
$\widehat{\mathcal{O}}_y$ and $\widehat{\mathcal{O}}_{y_1}$, as in part (a). 
Consequently, $A = \Psi_{\rm insep}^*(\widehat{\mathcal{O}}_{y_1})$.
After replacing $Y$ by $Y_1$ and $y$ by $y_1$, we see that for the purpose of proving part (b), we may replace $\Psi$ by
$\Psi_{\rm insep}$. In other words, we may assume without loss of
generality that $\Psi = \Psi_{\rm insep} \colon X \to Y$ is a purely inseparable
morphism, i.e., $\widehat{\mathcal{O}}_x$ 
is purely inseparable over $A$. Then for any $u/v \in \Frak(A) \cap \widehat{\mathcal{O}}_x$, 
there exists an $r \geqslant 0$ such that $(u/v)^{p^r}\in A$. Note that as a $k$-algebra, 
\[ A \simeq \widehat{\mathcal{O}}_y \simeq K[[t_1, \ldots, t_{\dim(Y)} ]]  \]
where the first isomorphism is via $\Psi^*$ and the second follows from our assumption that $Y$ is smooth at $y$.
Hence, $A$ is a unique factorization domain. In particular, $A$ integrally closed and 
thus $u/v \in A$. This completes the proof of Lemma~\ref{lem3.1}.
\end{proof}

\begin{lemma}
\label{lem3.2}
Let $G$ be a semiabelian variety and
$\Psi \colon G \to G$ be a dominant self-map of varieties over $K$ (not necessarily a group homomorphism). Then

\smallskip
(a) $\Psi = T_{\alpha} \circ \Psi_0$, where $T_{\alpha} \colon G \to G$ denotes translation by some $\alpha \in G(K)$ and
$\Psi_0 \colon G \to G$ is a dominant group homomorphism.

\smallskip
(b) Moreover, there exists another semiabelian variety $G_1$, a purely inseparable group homomorphism
$\Psi_{\rm insep} \colon G \to G_1$ and an etale morphism of varieties $\Psi_{\rm et} \colon G_1 \to G$
such that $\Psi = \Psi_{\rm et} \circ \Psi_{\rm insep}$.
\end{lemma}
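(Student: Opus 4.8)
The plan is to prove part (a) first, then bootstrap it into part (b) using the standard Stein-factorization-type decomposition of an isogeny into a purely inseparable part followed by an étale (separable) part.

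For part (a), I would argue as follows. Let $e \in G(K)$ denote the identity element and set $\alpha := \Psi(e)$. Define $\Psi_0 := T_{\alpha}^{-1} \circ \Psi = T_{-\alpha} \circ \Psi$, so that $\Psi_0(e) = e$ and $\Psi = T_{\alpha} \circ \Psi_0$; since $T_{\alpha}$ is an automorphism of the variety $G$ and $\Psi$ is dominant, $\Psi_0$ is dominant as well. It remains to show that a dominant self-map $\Psi_0 \colon G \to G$ fixing the identity is a group homomorphism. This is a rigidity statement. The cleanest route is: a semiabelian variety contains no rational curves, equivalently every morphism $\mathbb{A}^1 \to G$ is constant (this is exactly the rigidity property emphasised in the introduction, and follows from the structure sequence \eqref{eq:semiabelian} together with the corresponding facts for abelian varieties and for $\mathbb{G}_m$). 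Given this, the usual rigidity lemma for abelian-variety-like targets applies: consider the map $h \colon G \times G \to G$ defined by $h(x,y) = \Psi_0(xy)\,\Psi_0(x)^{-1}\,\Psi_0(y)^{-1}$. Then $h(e, y) = h(x, e) = e$ for all $x, y$, and restricting $h$ to $\{x\} \times G$ for fixed $x$ and invoking the absence of nonconstant maps from the various source factors (one reduces to analysing the map on a curve through $e$), one concludes $h$ is identically $e$, i.e.\ $\Psi_0$ is a homomorphism. I would cite \cite{NW} (and \cite{Iitaka} in characteristic $p$) for the precise rigidity statement rather than reprove it. The main obstacle here is being careful that the rigidity lemma, usually stated for abelian varieties or for complete varieties, genuinely applies to the non-complete group $G$; the point is that one only needs that every morphism from a smooth rational curve (or from $\mathbb{A}^1$) into $G$ is constant, which does hold for semiabelian $G$.

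For part (b), I start from part (a): write $\Psi = T_{\alpha} \circ \Psi_0$ with $\Psi_0$ a dominant, hence surjective with finite kernel — that is, an \emph{isogeny} — group homomorphism $G \to G$. Any isogeny of commutative group schemes over a perfect field factors canonically as $\Psi_0 = \Psi_{\rm sep} \circ \Psi_{\rm ins}$, where $\Psi_{\rm ins} \colon G \to G/(\ker \Psi_0)^{\circ}_{\mathrm{red}}$-type quotient is purely inseparable and $\Psi_{\rm sep}$ is separable, i.e.\ étale; concretely one quotients $G$ by the infinitesimal part of $\ker \Psi_0$ to obtain a semiabelian variety $G_1$, with $\Psi_{\rm ins} \colon G \to G_1$ purely inseparable and the induced $\Psi_{\rm sep} \colon G_1 \to G$ étale (in characteristic $0$ the kernel has no infinitesimal part, $G_1 = G$, $\Psi_{\rm ins} = \mathrm{id}$, and the statement is immediate). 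Finally absorb the translation: set $\Psi_{\rm insep} := \Psi_{\rm ins}$ and $\Psi_{\rm et} := T_{\alpha} \circ \Psi_{\rm sep}$. Since $T_{\alpha}$ is an isomorphism of varieties, it is étale, and a composition of étale morphisms is étale, so $\Psi_{\rm et} \colon G_1 \to G$ is étale and $\Psi = \Psi_{\rm et} \circ \Psi_{\rm insep}$, as required. The only subtlety to record carefully is that $G_1$ — a quotient of the semiabelian variety $G$ by a finite (infinitesimal) subgroup scheme — is again a semiabelian variety; this follows because such a quotient is again a commutative algebraic group that is an extension of (a quotient of) the abelian variety $A$ by (a quotient of) the torus $\mathbb{G}_m^N$, hence again of the form \eqref{eq:semiabelian}. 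I expect part (a), specifically the invocation of rigidity in the non-complete setting, to be the genuine content, while part (b) is a formal consequence of the Stein/étale–inseparable factorization together with part (a).
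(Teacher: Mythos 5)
Your proposal is correct and follows essentially the same route as the paper: part (a) is the rigidity statement that the paper simply cites as \cite[Theorem 2]{Iitaka}, and for part (b) the paper likewise takes the connected--\'etale decomposition of the finite kernel of $\Psi_0$ (quotienting by the infinitesimal part $N^0$ to get the purely inseparable factor, with the induced map $G/N^0\to G/N\simeq G$ \'etale) and absorbs the translation $T_\alpha$ into the \'etale factor.
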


\begin{proof} (a) is proved in~\cite[Theorem 2]{Iitaka}. To prove (b), write $\Psi = T_{\alpha} \circ \Psi_0$ as in part (a).
Denote the kernel of $\Psi_0$ by $N$; it is a finite abelian group scheme. Its connected component $N^0$ is 
infinitesimal, and the component group $N/N^0$ is \'etale; see~\cite[Exp.~VI$_A$, Proposition 5.5.1]{sga}.
Thus $\Psi_0$ can be written as a composition $\Psi_1 \circ \Psi_2$, where $\Psi_2 \colon G \to G/N^0$ is purely inseparable,
and $\Psi_1 \colon G/N^0 \to G/N \simeq G$ is \'etale. Setting $G_1 = G/N^0$, $\Psi_{\rm et}= T \circ \Psi_1$
and $\Psi_{\rm insep} = \Psi_2$, we obtain a desired decomposition $\Psi = \Psi_{\rm et} \circ \Psi_{\rm insep}$. 
\end{proof}

\begin{lemma}
\label{lem:quotient}
Let $G$ be a semiabelian variety defined over an algebraically closed field $K$, endowed with a regular dominant self-map $\Psi$. Let $W\subset G$ be a totally invariant irreducible subvariety under the action of $\Psi$ and let $W_0:={\rm Stab}_G(W)$ be the stabilizer of $W$ under the translation action of $G$ on itself. We let $\iota:G\lra G/W_0$ be the natural quotient map. Then 

\smallskip
(a) $\Psi$ induces a dominant regular self-map $\overline{\Psi}$ on $\overline{G} := G/W_0$ such that 
$\overline{\Psi}\circ \iota = \iota\circ \Psi$.

\smallskip
(b) $\overline{W}:=\iota(W)$ is totally invariant under the action of $\overline{\Psi}$.

\smallskip
(c) The stabilizer of $\overline{W}$ is $\overline{G}$ is trivial.
\end{lemma}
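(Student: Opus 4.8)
The plan is to prove the three parts of Lemma~\ref{lem:quotient} in order, using that $\Psi = T_\alpha \circ \Psi_0$ for a group homomorphism $\Psi_0$ (Lemma~\ref{lem3.2}(a)) and elementary properties of the stabilizer $W_0 = \mathrm{Stab}_G(W)$, which is a closed algebraic subgroup of $G$.

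\medskip
\textbf{Part (a).} First I would show $\Psi$ descends to $\overline{G} = G/W_0$. Write $\Psi = T_\alpha \circ \Psi_0$. The map $\iota \circ \Psi \colon G \to \overline{G}$ is constant on cosets of $W_0$ if and only if $\Psi_0(W_0) \subseteq W_0$, since translation by $\alpha$ only shifts by a fixed element of $\overline{G}$. So the key point is $\Psi_0(W_0) \subseteq W_0$. To see this, I would use total invariance: $\Psi^{-1}(W) = W$. For $w_0 \in W_0$ and any $x \in W$, total invariance gives $\Psi(x) \in W$; since $W$ is $W_0$-stable under translation, $\Psi(x) + \Psi_0(w_0) = \Psi(x + w_0) \in W$ as well (here $x + w_0 \in W$ because $w_0 \in W_0$), hence $\Psi_0(w_0) \in \mathrm{Stab}_G(W) = W_0$ — wait, more carefully: $\Psi_0(w_0)$ translates $\Psi(W)$ into $W$; since $\Psi$ is dominant and $W$ irreducible, $\Psi(W)$ is dense in $W$, so $\Psi_0(w_0) + W = W$, i.e.\ $\Psi_0(w_0) \in W_0$. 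Thus $\overline{\Psi}$ is well-defined; it is regular because $\iota$ is a smooth (hence faithfully flat) quotient map, so $\overline{\Psi}$ is the unique morphism fitting into the commutative square by descent, and it is dominant since $\iota \circ \Psi = \overline{\Psi} \circ \iota$ is dominant (both $\Psi$ and $\iota$ being dominant).

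\medskip
\textbf{Part (b).} I would show $\overline{\Psi}^{-1}(\overline{W}) = \overline{W}$ set-theoretically. The inclusion $\overline{\Psi}(\overline{W}) \subseteq \overline{W}$ follows from $\Psi(W) \subseteq W$ and $\overline{\Psi} \circ \iota = \iota \circ \Psi$. For the reverse, suppose $\bar{x} \in \overline{G}$ with $\overline{\Psi}(\bar{x}) \in \overline{W}$; lift to $x \in G$ with $\iota(x) = \bar{x}$. Then $\iota(\Psi(x)) = \overline{\Psi}(\bar x) \in \overline{W} = \iota(W)$, so $\Psi(x) \in W + W_0 = W$ (using that $W$ is $W_0$-stable). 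By total invariance of $W$ under $\Psi$, $x \in W$, hence $\bar x = \iota(x) \in \overline{W}$. This gives $\overline{\Psi}^{-1}(\overline W) \subseteq \overline W$, completing the proof.

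\medskip
\textbf{Part (c).} Finally, I would show $\mathrm{Stab}_{\overline{G}}(\overline{W})$ is trivial. Let $\pi \colon \overline G \to \overline G / \mathrm{Stab}_{\overline G}(\overline W)$ be the quotient; the preimage in $G$ of $\mathrm{Stab}_{\overline{G}}(\overline W)$ under $\iota$ is a closed subgroup $H \supseteq W_0$ with $H/W_0 = \mathrm{Stab}_{\overline G}(\overline W)$. The key claim is $H \subseteq \mathrm{Stab}_G(W) = W_0$, which then forces $\mathrm{Stab}_{\overline G}(\overline W) = H/W_0$ trivial. To prove the claim, take $h \in H$; then $h + W_0 \in \overline G$ stabilizes $\overline W = \iota(W)$, so $\iota(h + W) = \overline W = \iota(W)$, meaning $h + W \subseteq \iota^{-1}(\iota(W)) = W + W_0 = W$, where the last equality is because $W$ is already $W_0$-stable. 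By irreducibility and dimension count ($h + W$ and $W$ have the same dimension and $h+W \subseteq W$), we get $h + W = W$, so $h \in \mathrm{Stab}_G(W) = W_0$. Hence $H = W_0$ and the stabilizer of $\overline W$ in $\overline G$ is trivial.

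\medskip
\textbf{Main obstacle.} The statements are essentially a bookkeeping exercise once one knows $\Psi$ is a translate of a homomorphism, so there is no deep obstacle; the one point requiring genuine care is verifying in part (a) that $\Psi_0$ preserves $W_0$, and more precisely ensuring that the descended map $\overline\Psi$ is \emph{regular} (not merely rational) — this is where I rely on the fact that $\iota$ is a geometric quotient by a smooth group scheme action, so morphisms descend, together with the observation that $\iota \circ \Psi$ is already $W_0$-invariant.
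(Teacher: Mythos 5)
Your proposal is correct and follows essentially the same route as the paper: establish $\Psi_0(W_0)\subseteq W_0$ to descend $\Psi$, then use $\iota^{-1}(\iota(W))=W+W_0=W$ together with total invariance for parts (b) and (c). The only small imprecision is your justification in part (a) that $\Psi(W)$ is dense in $W$ ``since $\Psi$ is dominant and $W$ irreducible'' --- dominance of $\Psi$ on $G$ alone does not give this; the correct reason is that $\Psi$ is surjective (indeed finite, by Lemma~\ref{lem3.2}(a)), so total invariance yields $\Psi(W)=\Psi(\Psi^{-1}(W))=W$, which is exactly the identity the paper uses.
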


\begin{proof}
Write $\Psi = T_{\alpha} \circ \Psi_0$ as in Lemma~\ref{lem3.2}(a).

(a) We claim that
\begin{equation} \label{claim:W_0_inv}
\Psi_0(W_0)\subseteq W_0. 
\end{equation}
To prove the claim, note that $\alpha + \Psi_0(W)=\Psi(W) = W$ and thus 
\begin{equation}
\label{eq:W_0_inv_2}
W_0={\rm Stab}_G(W)={\rm Stab}_G\left(\alpha + \Psi_0(W)\right)={\rm Stab}_G\left(\Psi_0(W)\right).
\end{equation}
On the other hand, since $\Psi_0$ is a group endomorphism and $W_0$ is the stabilizer of $W$, we have
\begin{equation}
\label{eq:W_0_inv_3}
\Psi_0(W_0)\subseteq {\rm Stab}_G\left(\Psi_0(W)\right).
\end{equation}
Claim~\eqref{claim:W_0_inv} readily follows from~\eqref{eq:W_0_inv_2} and \eqref{eq:W_0_inv_3}. 

Continuing with the proof of part (a), 
in view of~\eqref{claim:W_0_inv}, $\Psi$ induces a regular self-map $\overline{\Psi} \colon \overline{G} \to \overline{G}$ given by
\[ \overline{\Psi}(\iota(x)):=\iota (\alpha) +\iota (\Psi_0(x)) . \]
Since $\Psi$ is dominant, $\overline{\Psi}$ is also dominant. 

\smallskip
(b) Choose $x\in G$ with the property that $\overline{\Psi}(\iota(x))\in \overline{W}$.
Then there exists $w_0 \in W_0$ such that $w_0 +\Psi(x)\in W$. Translating both sides by $-w_0$ and remembering that $W_0$ is the stabilizer of $W$, we see that $\Psi(x)\in W$.  Since $W$ is totally invariant under the action of $\Psi$, we conclude that $x \in W$
and thus $\iota(x) \in \overline{W}$. This shows that $\overline{W}$ is totally invariant under the action of $\overline{\Psi}$. 

\smallskip
(c) Suppose $\iota(g)$ lies in the stabilizer of $\overline{W}$. Our goal is to show that $g \in W_0$.
Indeed, for any $w \in W$, we have $\iota(g) + \iota(w) \in \overline{W}$ or equivalently, $g + w \in W + W_0$.
By the definition of $W$, $W + W_0 = W$. Thus $g \in \operatorname{Stab}_G(W) = W_0$, as claimed.
\end{proof}

\section{Proof of Proposition~\ref{cor:indeterminacy}}
\label{sec:indeterminacy}

\subsection{Key lemma}

Our proof of Proposition~\ref{cor:indeterminacy} will rely on the following lemma.

\begin{lemma}
\label{lem:J_nonempty}
Let $G$ be a semiabelian variety defined over an algebraically closed field $K$. Suppose $f \circ \Psi= g$ for some dominant regular self-map $\Psi \colon G \to G$ and some dominant rational self-maps
$f, g \colon G \dasharrow G$. Then $g$ is regular at a point $x \in G(K)$ if and only if $f$ is regular at $y = \Psi(x)$.
\end{lemma}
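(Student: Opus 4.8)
The plan is to analyze what it means for $f$ (resp. $g$) to be regular at a point by working with local rings, and to exploit Lemma~\ref{lem3.2}(b), which says $\Psi = \Psi_{\rm et}\circ\Psi_{\rm insep}$ decomposes into an étale part and a purely inseparable part. The statement is symmetric in the sense that $g$ regular at $x$ is being compared to $f$ regular at $y=\Psi(x)$, so I expect to prove the two implications by the same mechanism. First I would recall that for a rational self-map $h\colon G\dashrightarrow G$ and a point $z\in G(K)$, regularity of $h$ at $z$ is equivalent to the statement that the comorphism $h^*$ carries the coordinate functions of $G$ near $h(z)$ into the local ring $\mathcal{O}_z$, and passing to completions, into $\widehat{\mathcal{O}}_z$; since $G$ is a smooth variety this is a clean criterion. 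Concretely, $g$ is regular at $x$ iff $g^*(\widehat{\mathcal{O}}_{g(x)})\subseteq\widehat{\mathcal{O}}_x$, and similarly $f$ is regular at $y$ iff $f^*(\widehat{\mathcal{O}}_{f(y)})\subseteq\widehat{\mathcal{O}}_y$. Note $g(x)=f(\Psi(x))=f(y)$, so the two target points coincide; call this point $w$.

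Next I would use the factorization $g^* = \Psi^*\circ f^*$ of comorphisms (rational maps), which at the level of fraction fields reads $\Frak(\widehat{\mathcal{O}}_x) \supseteq \Psi^*(\Frak(\widehat{\mathcal{O}}_y)) \supseteq \Psi^*(f^*(\Frak(\widehat{\mathcal{O}}_w)))$. The easy direction is: if $f$ is regular at $y$, then $f^*(\widehat{\mathcal{O}}_w)\subseteq\widehat{\mathcal{O}}_y$, and since $\Psi$ is regular (everywhere), $\Psi^*(\widehat{\mathcal{O}}_y)\subseteq\widehat{\mathcal{O}}_x$; composing gives $g^*(\widehat{\mathcal{O}}_w)\subseteq\widehat{\mathcal{O}}_x$, so $g$ is regular at $x$. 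For the converse, assume $g$ is regular at $x$, so $g^*(\widehat{\mathcal{O}}_w)=\Psi^*(f^*(\widehat{\mathcal{O}}_w))\subseteq\widehat{\mathcal{O}}_x$. I want to conclude $f^*(\widehat{\mathcal{O}}_w)\subseteq\widehat{\mathcal{O}}_y$. A priori $f^*(\widehat{\mathcal{O}}_w)$ lies in $\Frak(\widehat{\mathcal{O}}_y)$, and its image under $\Psi^*$ lands in $\widehat{\mathcal{O}}_x$. So it suffices to show that an element $\xi\in\Frak(\widehat{\mathcal{O}}_y)$ with $\Psi^*(\xi)\in\widehat{\mathcal{O}}_x$ must already lie in $\widehat{\mathcal{O}}_y$. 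Identifying $\widehat{\mathcal{O}}_y$ with its image $A=\Psi^*(\widehat{\mathcal{O}}_y)\subseteq\widehat{\mathcal{O}}_x$ (injectivity of $\Psi^*$ on completions holds since $\Psi$ is dominant and $G$ is integral, hence $\Psi^*$ is injective on function fields and on completed local rings because $\widehat{\mathcal{O}}_y\to\widehat{\mathcal{O}}_x$ is flat/faithfully flat — this point needs a brief justification), the claim becomes exactly $\Frak(A)\cap\widehat{\mathcal{O}}_x = A$, which is precisely the content of Lemma~\ref{lem3.1}(b): here $Y=G$ is smooth at $y$, and $\Psi=\Psi_{\rm et}\circ\Psi_{\rm insep}$ gives the required decomposition, with $\Psi_{\rm insep}$ purely inseparable at $x$ and $\Psi_{\rm et}$ étale at $\Psi_{\rm insep}(x)$, by Lemma~\ref{lem3.2}(b).

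The main obstacle, and the only genuinely delicate point, is handling positive characteristic correctly: in characteristic $0$ every dominant $\Psi$ is generically étale and one could get away with Lemma~\ref{lem3.1}(a), but in characteristic $p$ the purely inseparable part is real, and one must verify that $\Psi^*$ remains injective on completed local rings (so that the identification $\widehat{\mathcal{O}}_y\cong A$ is legitimate) and that the hypotheses of Lemma~\ref{lem3.1}(b) are literally met at the points in question — in particular that $\Psi_{\rm insep}$ is purely inseparable \emph{at the relevant point} $x$, which follows since $\Psi_{\rm insep}$ is a purely inseparable morphism of semiabelian varieties and is therefore purely inseparable at every point. I would also remark that $\widehat{\mathcal{O}}_x$ being a domain (as $G$ is smooth, hence regular, hence its completed local rings are regular domains) is what makes the fraction-field manipulations above valid. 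Once these local-algebra points are in place, the two implications follow formally as sketched, completing the proof.
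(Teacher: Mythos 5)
Your proposal is correct and follows essentially the same route as the paper: the easy direction by composing comorphisms, and the converse by reducing to the statement $\Frak(A)\cap\widehat{\mathcal{O}}_x=A$ for $A=\Psi^*(\widehat{\mathcal{O}}_y)$, supplied by Lemma~\ref{lem3.1}(b) via the decomposition $\Psi=\Psi_{\rm et}\circ\Psi_{\rm insep}$ of Lemma~\ref{lem3.2}(b). The only difference is cosmetic — the paper argues by contradiction with an explicit element $\beta/\gamma$, while you phrase the same containment directly — and your side remarks on injectivity of $\Psi^*$ and smoothness of $G$ match the level of detail in the paper.
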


\begin{proof}
One direction is obvious: if $f$ is regular at $y$, then $g$ is regular as $x$. To prove the opposite implication, assume that
$g$ is regular at $x$ and set $z:=g(x)$. Consider the diagram of induced maps
\[ \xymatrix{\widehat{\mathcal{O}}_y  
\ar@{->}[dr]_{\Psi^*}    &  \widehat{\mathcal{O}}_z \ar@{->}[d]^{g^*} \\                                     
& \widehat{\mathcal{O}}_x } 	\]
where $\widehat{\mathcal{O}}_x$, $\widehat{\mathcal{O}}_y$ and $\widehat{\mathcal{O}}_z$ denote the completions of the local ring of $G$ at $x$, $y$ and $z$, respectively.
Our goal is to prove that $f^*:\widehat{\mathcal{O}}_z \lra\widehat{\mathcal{O}}_y$ is well defined. This will tell us
that $f$ is regular at $y$. A priori, we only know that $f$ induces an inclusion
$ f^* \colon \Frak(\widehat{\mathcal{O}}_z) 
\hookrightarrow 
\Frak(\widehat{\mathcal{O}}_y)$
of the fraction field of $\widehat{\mathcal{O}}_z$ into the fraction field of $\widehat{\mathcal{O}}_y$.

Assume the contrary: there exists an $\alpha\in \widehat{\mathcal{O}}_z$ such that 
\begin{equation}
\label{eq:contrary}
f^*(\alpha)=\frac{\beta}{\gamma}\notin \widehat{\mathcal{O}}_y \text{ for some }\beta,\gamma\in \widehat{\mathcal{O}}_y.
\end{equation}
Applying $\Psi^*$ to both sides of~\eqref{eq:contrary} and remembering that $\Psi^* \circ f^* = g^*$, we obtain 
\begin{equation}
\label{eq:contrary_2}
\frac{\Psi^*(\beta)}{\Psi^*(\gamma)}= g^*(\alpha)\in \widehat{\mathcal{O}}_x.
\end{equation}
By Lemma~\ref{lem3.2}, $\Phi$ can be decomposed as $\Psi_{\rm et} \circ \Psi_{\rm insep}$, where
$\Psi_{\rm insep} \colon G \to G_1$ is an inseparable algebraic group homomorphism for some semiabelian variety $G_1$ and
$\Psi_{\rm et} \colon G_1 \to G$ is an etale map. By Lemma~\ref{lem3.1}(b), 
\begin{equation} \label{e3.2}
\Frak(A) \cap \widehat{\mathcal{O}}_x = A.
\end{equation}
\smallskip
By~\eqref{eq:contrary_2}, $\Psi^*(\beta/\gamma) = {\Psi^*(\beta)}/{\Psi^*(\gamma)}$ 
lies in both $\widehat{\mathcal{O}}_x$ and $\Frak(A)$.
By~\eqref{e3.2}, $\Psi^*(\beta/\gamma)=\Psi^*(\alpha_0)$ for some $\alpha_0\in \widehat{\mathcal{O}}_y$.
Now recall that since $\Psi \colon G \to G$ is dominant, $\Psi^*$ is injective.
Thus 
\[ \frac{\beta}{\gamma}=\alpha_0\in \widehat{\mathcal{O}}_y, \]
contradicting~\eqref{eq:contrary}. This contradiction completes the proof of Lemma~\ref{lem:J_nonempty}.
\end{proof}

\begin{remark}  \label{ex:matrix}
Lemma~\ref{lem:J_nonempty} fails if 
we replace $G$ by an arbitrary smooth variety (not necessarily a semiabelian variety).   
For example, let $\mathbb K$ be a field of characteristic $0$ and consider  
the rational map $f \colon \mathbb{A}^2\dashrightarrow \mathbb{A}^2$ given by 
$(x,y) \mapsto (x^{10} y^2, x^2 y^{-1})$.  The map $f$ is not regular, but 
$f^2$ and $f^3$ and both regular.
To prove this, set
\[ C:= \left( \begin{array}{cc} 10 & 2 \\ 2 & -1\end{array}\right), \] 
compute $C^2$ and $C^3$ and check that these matrices have strictly positive entries. This gives, in fact, that $f^n$ is regular for every $n\ge 2$, despite the fact that $f$ itself is not regular. 

Letting $\Psi = f^2$, $g = f^3$, $x = (0, 0)$, and $y = \Psi(x) = (0, 0)$, we see that $f \circ \Psi = g$, 
$\Psi$ is regular, $g$ is regular at $x$ but $f$ is not regular at $y$.
Thus Lemma~\ref{lem:J_nonempty} fails here.
Note that the regular map $\Psi$ in this example is not \'etale at $x$, and thus Lemma~\ref{lem3.1} does not apply.
\end{remark}

\subsection{Conclusion of the proof of Proposition~\ref{cor:indeterminacy}} 
By~\cite[Proposition 1.3]{artin} the indeterminacy locus of a rational map from a nonsingular variety into a group variety is pure of codimension $1$ (possibly empty). Thus in the setting of Proposition~\ref{cor:indeterminacy}, the indeterminacy locus $W_n$ of $\Phi^n$ is pure of codimension $1$ (again, possibly empty) in $G$. Since $\Phi^{n+m}=\Phi^m \circ \Phi^n$ and $\Phi^m$ is regular, we clearly have $W_{n+m}\subseteq W_n$. 
By the Noetherian property, the descending chain
\[ W_1 \supseteq W_{m+1} \supseteq W_{2m + 1} \supseteq W_{3m + 1} \supseteq \ldots \]
of closed subset of $G$ terminates. That is, 
there exists an integer $k \geqslant 0$ such that
$W_{mk+1} = W_{m i + 1}$ for any $i\geqslant k$. In particular,
$\Phi^{km +1}$ and $\Phi^{(k+1)m + 1}$ have the same indeterminacy locus, $W = W_{km + 1} = W_{(k+1)m + 1}$. On the other hand, by 
Lemma~\ref{lem:J_nonempty} with $\Psi = \Phi^m$,
$f = \Phi^{km + 1}$ and $g = \Phi^{(k+1)m + 1}$, we see that
$\Phi^{km + 1}$ is regular at $x$ if and only if $\Phi^{(k+1)m + 1}$
if regular at $\Phi^m(x)$. In other words, 
$W_{(k+1)m + 1} = \Phi^{-m}(W_{km + 1})$ or equivalently
$W = \Phi^{-m}(W)$.

Now recall that $W$ is a finite union of distinct irreducible subvarieties $W_1, \ldots, W_r$ of $G$ of codimension $1$. 
If $r = 1$, i.e., $W$ is irreducible, then we have constructed a $W$ with desired properties, and the proof is complete.

In general, note that by Lemma~\ref{lem3.2}(a), $\Phi^m \colon G \to G$ is surjective on $K$-points.
Since $W$ is totally invariant, $\Phi^m$ restricts to a surjective map $W \to W$. In particular, 
$\Phi^m$ permutes the generic points of the irreducible components $W_1, \ldots, W_r$. After 
replacing $m$ by a multiple $md$, where $d$ is the order of this permutation,
we may assume that $\Psi = \Phi^m \colon G \to G$ 
maps each $W_i$ dominantly onto itself.
We claim that $W_1$ is totally invariant under $\Psi$. If we establish this claim, then after
replacing $W$ by $W_1$, our the proof of Proposition~\ref{cor:indeterminacy} will be complete.

To prove the claim, note that $\Psi^{-1}(W_1) = W_1 \cup V_2 \cup V_3 \cup \ldots \cup V_r$, where
$V_i = \Psi^{-1}(W_1) \cap W_i$. Since $\Psi$ maps each $W_i$ dominantly onto itself, $V_i \subsetneq W_i$
for each $i = 2, \ldots, r$. In particular, each $V_i$ is of codimension $\geq 2$ in $G$. On the other hand, by Lemma~\ref{lem3.2}(b),
$\Psi \colon G \to G$ is the composition of an \'etale morphism and a purely inseparable morphism. This tells us that $\Psi^{-1}(W_1)$
is pure of codimension $1$ in $G$. We conclude that $\Psi^{-1}(W_1) = W_1$ (in other words, $V_2, \ldots, V_r$ are all contained in $W_1$),
as desired.
\qed

\begin{remark} \label{rem.general-base-field} The same proof shows that Proposition~\ref{cor:indeterminacy} remains valid 
over an arbitrary field $K$, if we interpret ``irreducible" as ``irreducible over $K$". In general the codimension $1$ subvariety
$W$ in the statement of Proposition~\ref{cor:indeterminacy} may not be irreducible over the algebraic closure $\overline{K}$ of $K$.
\end{remark}


\section{Proof of Proposition~\ref{prop:invariant}}
\label{sec:proof}

We note that throughout the proof we may, without loss of generality, replace $\Psi$ by a conjugate $U\circ \Psi \circ U^{-1}$,
where $U$ is an automorphism $G \to G$ (e.g., a translation).

\subsection{The case where $G$ is of dimension one}

\begin{claim}
\label{lem:G_1}
Proposition~\ref{prop:invariant} holds when ${\rm dim}(G)=1$.
\end{claim}

\begin{proof}[Proof of Claim~\ref{lem:G_1}.]
A $1$-dimensional semiabelian variety $G$ is isomorphic to either $\bG_m$ or to an elliptic curve, and a totally invariant 
irreducible codimension $1$ subvariety $W$ of $G$ is a single point $x_0\in G$. After replacing $\Psi$ 
by $T_{x_0}^{-1}\circ \Psi\circ T_{x_0}$, 
where $T_{x_0} \colon G \to G$ is translation by $x_0$, we may assume that $\Psi$ is a group endomorphism 
of $G$ and $x_0 = 0$ is the identity element of $G$. The condition that $W = \{ 0 \}$ is totally invariant is equivalent to saying that $\operatorname{Ker}(\Psi)$
is an infinitesimal subgroup of $G$.

If $G=\bG_m$ then every endomorphism $\Psi$ is given by $\Psi(t) = t^d$. The condition that the kernel of $\Psi$ is infinitesimal translates to $d = \pm 1$ if $\operatorname{char}(K) = 0$ and $d = \pm p^r$ for some $r \geqslant 0$ if $\operatorname{char}(K) = p > 0$.
Setting $\chi \colon \bG_m \to \bG_m$ to be the identity map, we see that $\chi \circ \Psi^2 = \chi$  in characteristic $0$
and $\chi \circ \Psi^2 = F^{2r} \circ \chi$ in characteristic $p$, as desired.

If $G$ is an elliptic curve and $\Psi$ is separable, then $\Psi$ must be an automorphism of $G$. Since the automorphism group of
$G$ is finite, we have $\Psi^k = \operatorname{id}$ for some $k \geqslant 1$. Thus taking $\chi \colon G \to G$ to be the identity map, 
we obtain $\chi \circ \Psi^k = \chi$, as desired. In particular, this proves Claim~\ref{lem:G_1} in charcateristic $0$.

Finally, if $G$ is an elliptic curve, $\operatorname{char}(K) = p > 0$, then we can write
$\Psi$ as the composition of a separable isogeny with the $n$th power of the Frobenius map $F^n$ for some $n \geqslant 0$. 
Let $G^{(p^n)}$ be the image of the elliptic curve $G$ under $F^n$. Then  $$\Psi=\tau\circ F^n,$$
for some positive integer $n$, where $F^n \colon G\lra G^{(p^n)}$ and $\tau \colon G^{(p^n)}\lra G$ is a separable isogeny. Since $\operatorname{Ker}(\Psi)$ is infinitesimal, we conclude that $\tau$ must be an automorphism of $G$. 
Hence, $G$ is isomorphic with $G^{(p^n)}$. In particular, $G$ is isomorphic to an elliptic curve $G_1$ defined over a finite field, 
and an iterate of $\Psi$ induces a power of the corresponding Frobenius endomorphism on $G_1$. Thus the conclusion of Proposition~\ref{prop:invariant}(b) holds in this case as well.
\end{proof}

So, from now on, we may assume $\dim(G)>1$, which in particular means that the totally invariant subvariety $W$ has positive dimension.

\subsection{Reducing to the case of a subvariety with trivial stabilizer}

Next, we observe that for the purpose of proving Proposition~\ref{prop:invariant}, we may assume that
\begin{equation} \label{eq:stabilizer-trivial}
\operatorname{Stab}_{G}(W) = \{ 0 \}.
\end{equation}
Indeed, by Lemma~\ref{lem:quotient}, $\Psi \colon G \to G$ descends to
$\overline{\Psi} \colon \overline{G} \to \overline{G}$, via the natural projection
$\iota \colon G \to \overline{G} = G/W_0$. Moreover, $\overline{W} = \iota(W)$ is a codimension $1$
irreducible subvariety of $\overline{G}$ which is totally invariant under $\overline{\Psi}$
and $\operatorname{Stab}_{\overline{G}}(\overline{W_0}) = 0$. Suppose we know that
Proposition~\ref{prop:invariant}(a) holds for $\overline{W}$, $\overline{\Psi}$ and $\overline{G}$. In other words, there
exists a non-constant homomorphism $\overline{\chi} \colon \overline{G} \to G_1$ such that $\overline{\chi} \circ \overline{\Psi}^k =
\overline{\chi}$ for some $k \geqslant 1$
Then composing $\chi$ with $\iota$, we obtain a homomorphism $\chi \colon G \to G_1$ such that $\chi \circ \Psi^k = \chi$.
Hence, Proposition~\ref{prop:invariant} also holds for $W$, $G$ and $\Psi$.
Similarly, in part (b), if we know that there exists a semiabelian variety $G_1$ defined over a finite field $\mathbb F_q$ and a homomorphism
$\overline{\chi} \colon \overline{G} \to G_1$ such that $\chi \circ \Psi^k = F^r \circ \overline{\chi}$, then once again, 
composing $\chi$ with $\iota$, we obtain a homomorphism $\chi \colon G \to G_1$ such that $\chi \circ \Psi^k = F^r \circ \chi$.

\subsection{Working under the assumption the invariant subvariety has positive dimension and trivial stabilizer}

From now on we will assume that~\eqref{eq:stabilizer-trivial} holds. We will see that this is a very strong assumption; in particular, when $K$ is of characteristic zero it forces an iterate of $\Psi \colon G \to G$ to be the identity map.
The rest of the proof of Proposition~\ref{prop:invariant} will rely on the theorem of Pink and Rossler~\cite[Theorem~3.1]{P-R}.
Assumption~\eqref{eq:stabilizer-trivial} simplifies the conclusion of this theorem and will thus
be crucial for our argument. 

Write $\Psi=T_\alpha\circ \Psi_0$, where $\Psi_0$ be a dominant group endomorphism of $G$ and $\alpha\in G(K)$, 
as in Lemma~\ref{lem3.2}(a). By~\cite[Theorem~3.1]{P-R} we can write 
\begin{equation}
\label{eq:def_W}
W=\gamma+ h_1(X_1) + h_2(X_2) + \ldots + h_l(X_l)
\end{equation}
where

\begin{itemize}
\item $\gamma \in G(K)$,

\smallskip
\item $A_1, \ldots, A_l$ are semiabelian varieties, each equipped with 
dominant homomorphisms $f_i \colon A_i\lra A_i$,

\smallskip
\item
$X_i$ is a $f_i$-invariant closed subvariety of $A_i$ for each $i = 1, \ldots, l$, and

\smallskip
\item
$h_{i} \colon A_{i} \to G$ is a homomorphism of semiabelian varieties satisfying
\[ \Psi_0\circ h_i=h_i\circ f_i \text{ for each $i=1,\dots, l$.} \]
\end{itemize}

Furthermore, each dynamical system $\left(A_{i},f_i\right)$ can be assumed to be pure of some weight $a_i\geqslant 0$ in the sense of \cite[Definition~2.1]{P-R}.  

At the cost of replacing $\Psi$ and each $f_i$ an iterate, we may assume each $a_i$ is a non-negative integer. Moreover, 
if $a_i = 0$, we may assume that $f_i$ is the identity homomorphism on $A_i$ (again, after replacing $\Psi$ by a suitable iterate). 
and consequently, $\Psi_0$ induces the identity homomorphism on $G_i = h_i(A_i) \subset G$. 

On the other hand, $a_i$ can only be a positive integer if $\operatorname{char}(K) = p > 0$, $A_i$ is a semiabelian variety 
defined over a finite subfield of $K$ and $f_i$ is the $a_i$-th power of the usual Frobenius endomorphism. 
Clearly we may assume that each $A_{i}$ is nontrivial. Finally, by ~\cite[Theorem~3.1]{P-R} we know that the map $h_1\times \cdots \times h_l : A_1\times \cdots \times A_l \lra G$ is an isogeny since its image is a semiabelian variety containing a translate of $W$ (see equation~\eqref{eq:def_W}) and moreover, $W$ is a codimension-$1$ subvariety of $G$ with trivial stabilizer (see assumption~\eqref{eq:stabilizer-trivial}). 
It is precisely at this point where we use the assumption that $\dim(G) > 1$ or equivalently, that $\dim(W) > 0$. 

Next we split our analysis into two cases. In the first case $a_1 = \cdots = a_l = 0$, in the second case one of the integers $a_i$ is positive.
In particular, in characteristic $0$, only the first case can occur.

\begin{claim}
\label{lem:identity}
If $a_1 = \ldots = a_l = 0$, then $\Psi$ is the identity map.
In particular, the conclusion in Proposition~\ref{prop:invariant}(a) holds.
\end{claim}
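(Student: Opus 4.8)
The plan is to exploit the decomposition $W = \gamma + h_1(X_1) + \cdots + h_l(X_l)$ coming from \cite[Theorem~3.1]{P-R}, together with the standing assumptions that $\operatorname{Stab}_G(W) = \{0\}$, $W$ has codimension $1$, $\dim(G) > 1$, and $a_1 = \cdots = a_l = 0$ so that each $f_i$ is the identity on $A_i$. Since $h_1 \times \cdots \times h_l \colon A_1 \times \cdots \times A_l \to G$ is an isogeny and each $f_i = \operatorname{id}$, the product map $f_1 \times \cdots \times f_l$ is the identity on $A_1 \times \cdots \times A_l$, and the relations $\Psi_0 \circ h_i = h_i \circ f_i = h_i$ show that $\Psi_0$ restricted to each $G_i = h_i(A_i)$ is the identity. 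The first key step is therefore to deduce that $\Psi_0$ acts as the identity on the semiabelian subvariety $G_1 + \cdots + G_l$, which equals all of $G$ up to isogeny: indeed, since $h_1 \times \cdots \times h_l$ is an isogeny onto $G$, the sum $G_1 + \cdots + G_l$ has finite index in $G$, hence (being a connected subgroup of the same dimension) is all of $G$. So $\Psi_0 = \operatorname{id}_G$, and consequently $\Psi = T_\alpha$ is a translation.

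**Pinning down the translation.** It remains to show $\alpha = 0$, i.e.\ that the translation is trivial. Here I would use total invariance of $W$ directly: $\Psi^{-1}(W) = W$ means $T_{-\alpha}(W) = W$, i.e.\ $W - \alpha = W$, so $\alpha \in \operatorname{Stab}_G(W)$. But by assumption~\eqref{eq:stabilizer-trivial}, $\operatorname{Stab}_G(W) = \{0\}$, forcing $\alpha = 0$. Hence $\Psi = \operatorname{id}_G$, which is precisely the claimed conclusion; and then taking $\chi = \operatorname{id}_G \colon G \to G$ and $k = 1$ gives $\chi \circ \Psi^k = \chi$, so Proposition~\ref{prop:invariant}(a) holds in this case.

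**The main obstacle.** The one place requiring genuine care is the claim that $\Psi_0$ descending to the identity on each $G_i$ forces $\Psi_0 = \operatorname{id}_G$. One must be careful that replacing $\Psi$ by an iterate (as already done to arrange $a_i \in \mathbb{Z}_{\geq 0}$ and $f_i = \operatorname{id}$) is harmless for the final statement: if an iterate $\Psi^N$ is shown to be the identity, one cannot immediately conclude $\Psi$ itself is the identity — but that is fine, because the statement we actually need for Proposition~\ref{prop:invariant}(a) is only the existence of $\chi$ with $\chi \circ \Psi^k = \chi$, and $k = N$ works. So I would phrase Claim~\ref{lem:identity}'s conclusion as "an iterate of $\Psi$ is the identity," which still yields part (a). The second subtlety is checking that $G_1 + \cdots + G_l$ is connected and of full dimension so that it equals $G$; this follows because each $G_i$ is the image of a connected group under a homomorphism, their sum is the image of the connected group $A_1 \times \cdots \times A_l$ under $h_1 \times \cdots \times h_l$, and that image is all of $G$ since the map is an isogeny (in particular surjective). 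Everything else is formal.
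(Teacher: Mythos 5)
Your proof is correct and follows essentially the same route as the paper's: deduce $\Psi_0=\operatorname{id}$ from the identity action on each $G_i$ together with $G=G_1+\cdots+G_l$, then kill the translation part using total invariance of $W$ and the trivial-stabilizer assumption~\eqref{eq:stabilizer-trivial}. Your added remarks (that only an iterate of the original map is shown to be the identity, which suffices for the existence of $\chi$ with $\chi\circ\Psi^k=\chi$, and that the surjectivity of the isogeny $h_1\times\cdots\times h_l$ gives $G=\sum_i G_i$) are accurate points that the paper leaves implicit.
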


\begin{proof}[Proof of Claim~\ref{lem:identity}.]
Since the induced action of $\Psi_0$ on each $G_i$ is the identity and furthermore, $G=\sum_{i=1}^l G_i$, we conclude that $\Psi_0$ must be the identity map on $G$. Then $\Psi = T_{\alpha}$ is the translation map by some point $\alpha\in G$. Since $W$ is a $\Psi$-invariant subvariety with trivial stabilizer, this means that $\alpha=0$. Therefore, $\Psi$ is the identity map. In this case the conclusion of Proposition~\ref{prop:invariant} is obvious: we can that $G_1 = G$ and $\chi \colon G \to G_1$ to be the identity map.
\end{proof}

\begin{claim}
\label{lem:non-identity}
If there is some $i\in\{1,\dots, l\}$ such that $a_i > 0$, then $A_1$ is defined over a finite subfield $\F_q$ of $K$, and there exists 
a homomorphism $\chi \colon G \to A_1$ of semiabelian varieties such that $\chi \circ \Psi = F^{a_1} \circ \chi$, where $F$ is the Frobenius map.
\end{claim}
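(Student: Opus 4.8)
The plan is to invert the isogeny $h := h_1 \times \cdots \times h_l \colon B \to G$ (with $B := A_1 \times \cdots \times A_l$) supplied by \cite[Theorem~3.1]{P-R}, push the relation $\Psi_0 \circ h = h \circ f$ through this inverse, and finally remove the translation part of $\Psi$ by conjugating. First I would relabel the factors so that $a_1 > 0$; as recalled above, this forces $A_1$ to be defined over a finite subfield $\F_q \subset K$ with $q = p^{a_1}$ and forces $f_1 \colon A_1 \to A_1$ to be the $q$-power Frobenius $F^{a_1}$. Writing $f := f_1 \times \cdots \times f_l \colon B \to B$ and letting $\pi_1 \colon B \to A_1$ be the projection, the hypotheses $\Psi_0 \circ h_i = h_i \circ f_i$ say precisely that $\Psi_0 \circ h = h \circ f$.

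Next I would construct a homomorphism $\psi \colon G \to B$ inverting $h$ up to an isogeny. Since $h$ is an isogeny, its kernel $N$ is a finite commutative group scheme, hence annihilated by multiplication by $n := \#N$ (Deligne's theorem); thus $[n]_B$ factors through $h$, yielding $\psi \colon G \to B$ with $\psi \circ h = [n]_B$, and $\psi$ is surjective because $[n]_B$ is surjective and $h$ is an epimorphism. From $\psi \circ h = [n]_B$, from $\Psi_0 \circ h = h \circ f$, and from the fact that $[n]_B$ commutes with $f$, one gets $\psi \circ \Psi_0 \circ h = f \circ \psi \circ h$, and cancelling the epimorphism $h$ yields $\psi \circ \Psi_0 = f \circ \psi$. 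Composing with $\pi_1$ and putting $\chi := \pi_1 \circ \psi \colon G \to A_1$, this gives
\[ \chi \circ \Psi_0 = f_1 \circ \chi = F^{a_1} \circ \chi ; \]
moreover $\chi$ is non-constant, being a composite of the surjections $\psi$ and $\pi_1$ with $A_1$ nontrivial.

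It then remains to absorb the translation term. Write $\Psi = T_\alpha \circ \Psi_0$ as in Lemma~\ref{lem3.2}(a). Since throughout the proof we may replace $\Psi$ by a conjugate, I would conjugate by a translation $T_\beta$: this leaves $\Psi_0$ (hence the identity $\chi \circ \Psi_0 = F^{a_1} \circ \chi$) unchanged and replaces $\alpha$ by $\alpha + \beta - \Psi_0(\beta)$, so that $\chi$ of the new translation vector becomes $\chi(\alpha) - (F^{a_1} - \operatorname{id})(\chi(\beta))$. The endomorphism $F^{a_1} - \operatorname{id}$ of $A_1$ is \'etale (Frobenius is $0$ on the tangent space) with finite kernel $A_1(\F_q)$, hence an isogeny and in particular surjective (Lang's theorem); combined with the surjectivity of $\chi$, this lets me choose $\beta$ so that $\chi(\alpha) = (F^{a_1} - \operatorname{id})(\chi(\beta))$, i.e. so that $\chi$ kills the new translation vector. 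For the resulting conjugate, $\chi(\Psi(x)) = \chi(\alpha + \Psi_0(x)) = F^{a_1}(\chi(x))$, which is exactly $\chi \circ \Psi = F^{a_1} \circ \chi$.

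The hard part will be this last paragraph: because $\Psi$ is only a regular self-map and not a group homomorphism, one cannot avoid the translation $T_\alpha$, and the clean conclusion $\chi \circ \Psi = F^{a_1} \circ \chi$ (with no leftover translation) becomes available only after a suitable conjugation together with the surjectivity of the Lang-type isogeny $F^{a_1} - \operatorname{id}$ on $A_1$. Inverting $h$ in the inseparable case also takes a small amount of care (via Deligne's annihilation theorem), but the computation $\psi \circ \Psi_0 = f \circ \psi$ itself is a formal cancellation, and everything else reduces to bookkeeping on the Pink--Rossler decomposition.
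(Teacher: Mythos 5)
Your proof is correct, and at the strategic level it rests on the same two pillars as the paper's argument: first produce a surjective homomorphism $\chi\colon G\to A_1$ with $\chi\circ\Psi_0=f_1\circ\chi$, then absorb the translation part of $\Psi$ by a conjugation whose solvability comes from a Lang-type isogeny. The implementation of both steps, however, is genuinely different. For the first step, the paper does not invert $h$ directly: it invokes the extra structure from Pink--R\"ossler, namely the equivariant isogenies $j_i\colon G_i\to A_i$ with $j_i\circ h_i=[m_i]_{A_i}$, assembles these into a map that factors through the sum map $s\colon G_1\times\cdots\times G_l\to G$, and takes $\chi=\pi_1\circ\tilde s$; your construction of $\psi$ with $\psi\circ h=[n]_B$ via Deligne's theorem (a finite commutative group scheme is killed by its order) is self-contained, gives the equivariance by a formal cancellation against the epimorphism $h$, and avoids the paper's bookkeeping with the non-unique decompositions $x=\sum_i x_i$. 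For the second step, the paper solves the Lang equation upstairs: it deduces from the dominance of $f_1-{\rm id}$ on $A_1$ that $(\Psi_0)|_{G_1}-{\rm id}$ is dominant on $G_1$, finds $\beta_1$ with $\Psi_0(\beta_1)-\beta_1=\alpha_1$, and conjugates so that $\Psi=\Psi_0$ on the $G_1$-component; you instead push the entire translation vector down to $A_1$ and kill $\chi(\alpha)$ there using surjectivity of $F^{a_1}-{\rm id}$. This is the same input (the Lang isogeny on a semiabelian variety over a finite field) applied after $\chi$ rather than before, and is arguably cleaner since it needs surjectivity only on $A_1$ itself. Both versions, yours and the paper's, prove the identity only for a translation-conjugate of $\Psi$, which is covered by the blanket convention stated at the start of the paper's Section 5, so no gap there.
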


\begin{proof}[Proof of Claim~\ref{lem:non-identity}.]
We may assume without loss of generality that $i = 1$. As mentioned above, since 
$a_1 > 0$ we may further assume that $\operatorname{char}(K) = p > 0$, $A_1$ is defined over a finite field $\F_q$, and
$f_1 \colon A_1 \to A_1$ is the $a_1$-th power of the Frobenius map.

Write $\Psi =  T_{\alpha} \circ \Psi_0$, as in Lemma~\ref{lem3.2}(a).
Moreover, $G= G_1 + \cdots + G_l$, where $G_i= h_i(A_i)$. Choose $\alpha_i\in G_i$ so that
\[ \alpha=\alpha_1 + \cdots + \alpha_l. \]
The choice of the $\alpha_i$'s is not unique. On the other hand, the sum map
\begin{equation}
\label{eq:301}
s \colon G_1 \times \cdots \times G_l \lra G \text{ given by }s(x_1,\dots, x_l)= x_1 + \ldots + x_l
\end{equation}
is finite.  By the definition of the $G_i$'s, $\Psi_0$ induces a group homomorphism $(\Psi_0)_{|G_i}$
on each $G_i$. By a slight abuse of notation we will simply refer to this group homomorphism
as $\Psi_0$. Furthermore, for each $i=1,\dots,l$, the induced action of $\Psi$ on $G_i$ 
is given by 
\begin{equation}
\label{eq:302}
\Psi(x):=\Psi_0(x)+\alpha_i \text{ for each }x\in G_i.
\end{equation}

Since $\Psi_0\circ h_1=h_1\circ f_1$ and $f_1$ is the $a_1$-th power of the Frobenius (with $a_1>0$), we see that
$\left(f_1-{\rm id}_{A_1}\right):A_1\lra A_1$ is a dominant map. Thus $(\Psi_0)|_{G_1}:G_1\lra G_1$ has the property that 
$$
\left((\Psi_0)|_{G_1}-{\rm id}_{G_1}\right):G_1\lra G_1\text{ is also a dominant map.}
$$
Therefore, we can find $\beta_1\in G_1(K)$ such that 
$\left((\Psi_0)|_{G_1}-{\rm id}_{G_1}\right)(\beta_1)=\alpha_1$ or equivalently,
$\Psi_0(\beta_1) = \beta_1 + \alpha_1$. Setting
$\Psi' = T_{\beta_1}\circ \Psi\circ T_{\beta_1}^{-1}$, and remembering~\eqref{eq:302},
we see that for every $x \in G_1$,
\[ \Psi'(x) = \beta_1 + \Psi(x - \beta_1) = \beta_1 +\Psi_0(x - \beta_1) + \alpha_1 =
\beta_1 + \Psi_0(x) - \Psi_0(\beta_1) + \alpha_1 = \Psi_0(x). \]
In other words, after replacing $\Psi$ by a conjugate, $\Psi'$, 
may assume that 
\begin{equation}
\label{eq:105}
\text{$\Psi(x) = \Psi_0(x)$ for every $x \in G_1$.}
\end{equation} 

Fix some $i\in \{1,\dots, l\}$. The discussion before \cite[Definition~2.1,~p.~774]{P-R} shows that
there exists another $\Psi_0$-equivariant isogeny $j_i \colon G_i\lra A_i$. 
Here ``$\Psi_0$-equivariant" means that $f_i\circ j_i=j_i\circ \Psi_0$. Moreover, 
the isogeny $j_i$ has the property that $j_i\circ h_i=[m_i]_{A_i}$ is the multiplication 
map by some positive integer $m_i$ on $A_i$. Combining these, we obtain an  isogeny 
$$ G_1 \times \cdots \times G_l \to A = A_1 \times \cdots \times A_l $$ 
given by $j:= \left(m_0j_1, \ldots, m_0j_l \right)$. 
Since $\operatorname{Ker}(j)$ factors through $\operatorname{Ker}(s)$, there exists an isogeny
$\tilde{s} \colon G \lra A = A_1 \times \ldots \times A_l$ such 
that $j=\tilde{s}\circ s$. Concretely, for a given $x\in G(K)$, if we write 
\begin{equation}
\label{eq:101}
x=\sum_{i=1}^l x_i\text{ for some }x_i\in G_i;
\end{equation}
then,
\begin{equation}
\label{eq:102}
\tilde{s}(x):= \left(m_0j_1(x_1),\dots, m_0j_l(x_l)\right).
\end{equation}
The choice of $x_1, \ldots, x_l$ in~\eqref{eq:101} is not unique but $\tilde{s}(x)$ is independent of this choice. 
Furthermore, since $j$ is $\Psi_0$-equivariant, the map $\tilde{s} \colon G\lra A$ is also $\Psi_0$-equivariant. 
In other words, $f \circ \tilde{s}=\tilde{s}\circ \Psi_0$, where $f:=(f_1,\dots,f_l) \colon A \to A$.
Projecting to the first factor, $\pi_1 \colon A\lra A_1$, and setting $\chi=\pi_1\circ \tilde{s}:G\lra A_1$, we see that
\begin{equation}
\label{eq:104}
\chi \circ \Psi_0 = f_1 \circ \chi.
\end{equation}
It remains to show that $\chi \circ \Psi_0 \colon G \to A_1$ on the left hand side of~\eqref{eq:104} can be replaced 
by $\chi \circ \Psi$. In other words, 
\begin{equation} \label{e.last-step}
\text{$\chi \circ \Psi_0(x) = \chi \circ \Psi(x)$ for every $x \in G$.}
\end{equation}
Once this equality is established,~\eqref{eq:104} can be rewritten as $\chi \circ \Psi_0 = f_1 \circ \chi$.
Since $f_1$ is the $a_1$th power of the Frobenius, this will complete the proof of Claim~\ref{lem:non-identity} and thus of
Proposition~\ref{prop:invariant}.

To prove~\eqref{e.last-step}, write $x = x_1 + \ldots + x_l$, where each $x_i$ lies in $X_i$, as in~\eqref{eq:101}. Then 
\begin{align*} 
\chi \circ \Psi(x) & =   \pi_1 \circ \tilde{s} (\Psi(x_1) + \Psi(x_2) + \ldots + \Psi(x_s)) \\ 
 & =  \pi_1 \circ \tilde{s}(\Psi(x_1) + \Psi(x_2) + \ldots + \Psi(x_l)) \\
 &  =  \pi_1 \big( m_0 j_1(\Psi(x_1)), m_0 j_2(\Psi(x_2)), \ldots, m_0 j_l(\Psi(x_l)) \big) \\
 & =  m_0 j_1(\Psi(x_1))
\end{align*}
and similarly
$ \chi \circ \Psi_0(x) = m_0 j_1(\Psi_0(x_1))$.
By~\eqref{eq:105}, $\Psi(x_1) = \Psi_0(x_1)$, and~\eqref{e.last-step} follows.
This completes the proof of~\eqref{e.last-step} and thus of Claim~\ref{lem:non-identity},
Proposition~\ref{prop:invariant} and Theorem~\ref{thm:main}.
\end{proof}




\end{document}